\documentclass[a4paper]{article}
\usepackage[utf8]{inputenc}
\usepackage[english]{babel}
\usepackage{amsmath,amsthm,amssymb}
\usepackage{bbm}	
\usepackage[square,numbers]{natbib}	
\bibliographystyle{abbrvnat}
\usepackage{graphicx, color, soul} 
\usepackage[title]{appendix}
\usepackage[paperwidth=19.5cm,paperheight=25cm]{geometry}
\usepackage{url}
\usepackage{blkarray} 
\usepackage[colorlinks=true,linkcolor=blue,citecolor=blue]{hyperref}
\usepackage[capitalize,nameinlink]{cleveref}

\usepackage{authblk} 

\usepackage[shortlabels]{enumitem}	
\setlist{labelsep=.25in,leftmargin=*,labelindent=1cm,topsep=2pt,noitemsep}	
\setlist[enumerate]{label=(\roman*)}

\numberwithin{equation}{section}

\theoremstyle{plain}
\newtheorem{theorem}{Theorem}[section]

\newtheorem{proposition}{Proposition}[section]

\newtheorem*{theorem*}{Theorem}
\theoremstyle{remark}
\newtheorem{remark}{Remark}[section]
\newtheorem*{lemma*}{Lemma}
\newtheorem*{remark*}{Remark}
\theoremstyle{definition}
\newtheorem{example}{Example}[section]

\newcommand{\Polya}{P\'{o}lya }
\newcommand{\limN}{\lim_{n\rightarrow\infty}}

\newcommand{\sumN}{\sum_{i=1}^n}
\newcommand{\sumINF}{\sum_{n=1}^\infty}
\newcommand{\Be}{\textnormal{Beta}} 


\usepackage{geometry}
\geometry{verbose,tmargin=2cm,bmargin=2cm,lmargin=1.5cm,rmargin=1.5cm}

\title{Sufficientness postulates for measure-valued \Polya urn sequences}
\author[1,2]{ Hristo Sariev\thanks{h.sariev@math.bas.bg; hsariev@uni-sofia.bg}}
\author[2,1]{Mladen Savov\thanks{msavov@fmi.uni-sofia.bg; mladensavov@math.bas.bg}}
\affil[1]{\normalsize Institute of Mathematics and Informatics, Bulgarian Academy of Sciences, 8 Acad. Georgi Bonchev Str., Sofia 1113, Bulgaria}
\affil[2]{\normalsize Faculty of Mathematics and Informatics, Sofia University "St. Kliment Ohridski", 5 James Bourchier Blvd, Sofia 1164, Bulgaria}

\date{}

\begin{document}

\maketitle

\begin{abstract}
In a recent paper, the authors studied the distribution properties of a class of exchangeable processes, called measure-valued \Polya sequences (MVPS), which arise as the observation process in a generalized urn sampling scheme. Here we present several results in the form of ``sufficientness'' postulates that characterize their predictive distributions. In particular, we show that exchangeable MVPSs are the unique exchangeable models whose predictive distributions are a mixture of the marginal distribution and the average of a probability kernel evaluated at past observation. When the latter coincides with the empirical measure, we recover a well-known result for the exchangeable model with a Dirichlet process prior. In addition, we provide a ``pure'' sufficientness postulate for exchangeable MVPSs that does not assume a particular analytic form for the predictive distributions. Two other sufficientness postulates consider the case when the state space is finite.
\end{abstract}
\noindent{\bf Keywords:}
\Polya sequences, Urn models, Predictive distributions, Bayesian nonparametrics, Exchangeability, Sufficientness postulate

\noindent{\bf MSC2020 Classification:} 60G09, 60G25, 62G99, 62B99

\section{Introduction}

The exchangeable model with a Dirichlet process prior is a cornerstone of Bayesian nonparametric analysis and has inspired much of the subsequent work in the area (for an overview, see \cite{lijoi2010}). One remarkable feature of the Dirichlet process, due to \cite{regazzini1978} and later \cite{lo1991}, is that it is the unique prior for which the predictive distributions of the underlying exchangeable process are a mixture of the marginal distribution and the empirical measure. More specifically, let $(X_n)_{n\geq1}$ be a sequence of random variables taking values in some standard space, say $\mathbb{X}=[0,1]$, and suppose that $(X_n)_{n\geq1}$ is exchangeable, i.e. the order of $X_1,X_2,\ldots$ is probabilistically irrelevant (refer to Section \ref{section:preliminaries} for a rigorous treatment). Denote by $\nu(\cdot)=\mathbb{P}(X_1\in\cdot\,)$ the marginal distribution of the process. If for each $n=1,2,\ldots$ the predictive distribution of $X_{n+1}$ given $(X_1,\ldots,X_n)$ is the convex combination
\begin{equation}\label{intro:equation:dirichlet:prediction}
\mathbb{P}(X_{n+1}\in\cdot\mid X_1,\ldots,X_n)=(1-a_n)\nu(\cdot)+a_n\cdot\frac{1}{n}\sumN\delta_{X_i}(\cdot),
\end{equation}
for some constant $a_n\in(0,1)$, where $\delta_x$ is the unit mass at $x$, then Theorem 1 in \cite{lo1991} implies that $a_n=n/(n+\theta)$, with $\theta=a_1^{-1}-1$, and $(X_n)_{n\geq1}$ has a Dirichlet process prior with parameters $(\theta,\nu)$. The well-known Blackwell-MacQueen urn model \cite{blackwell1973} assumes \eqref{intro:equation:dirichlet:prediction} directly with $a_n=n/(n+\theta)$. 

Predictive constructions of the type \eqref{intro:equation:dirichlet:prediction} have several attractive features. For one, they provide a simple and efficient way to simulate the sequence $X_1,X_2,\ldots$. Indeed, \eqref{intro:equation:dirichlet:prediction} leads to a recursive procedure
\[\mathbb{P}(X_{n+1}\in\cdot\mid X_1,\ldots,X_n)=\frac{\theta+n-1}{\theta+n}\,\mathbb{P}(X_n\in\cdot\mid X_1,\ldots,X_{n-1})+\frac{1}{\theta+n}\,\delta_{X_i}(\cdot),\]
which allows for fast computations and limited storage of information for streaming data (see \cite[Section 5]{fortini2024} for more details on recursive algorithms and predictions). On the other hand, the system of predictive distributions $(\mathbb{P}(X_{n+1}\in\cdot\mid X_1,\ldots,X_n))_{n\geq0}$ determines the law of the process $\mathbb{P}((X_1,X_2,\ldots)\in\cdot\,)$ without the need for prior elicitation. Such a predictive approach to Bayesian modeling, focused on observable quantities instead of unknown parameters, is deeply rooted in the philosophical foundations of Bayesian analysis \cite{fortini2023,fortini2024}, and has recently enjoyed renewed interest, see e.g. \cite{berti2024,fong2023,fortini2024}. 

A particular class of predictive characterizations, to which \eqref{intro:equation:dirichlet:prediction} belongs (see below), are the so-called postulates of ``sufficientness'', dating back to the writings of Cambridge philosopher W. E. Johnson \cite{johnson1932} (for more historical details, see \cite{zabell1982} and \cite[Chapter 5]{scalas2010}). Unlike the usual notions of sufficiency, see e.g. \cite[Section 5]{fortini2000}, sufficientness postulates provide conditions on the functional form of the predictive distributions arising from natural judgments on the inductive process of learning. In other words, model specification is based on general assessments about the form and type of sample information used to make predictions. As an example, the original Johnson's sufficientness postulate assumes an exchangeable multinomial model on $\mathbb{X}=\{1,\ldots,k\}$ with $k$ types such that, for each $n=1,2,\ldots$, the probability of observing type $j\in\mathbb{X}$ on the $(n+1)$th trial
\begin{equation}\label{intro:equation:johnson_sufficientness}
\mathbb{P}(X_{n+1}=j|X_1,\ldots,X_n)=f_{n,j}(N_{n,j})
\end{equation}
depends on the sample $(X_1,\ldots,X_n)$ only by the number of times $N_{n,j}$ that $j$ has been observed in the past. By Theorem 2.1 in \cite{zabell1982}, any exchangeable process with predictive distributions \eqref{intro:equation:johnson_sufficientness} has a Dirichlet distribution prior (see also Proposition \ref{result:k-color} in Section \ref{section:results}). Similar considerations about the relevance of different pieces of sample information have led to sufficientness postulates for the two-parameter Poisson-Dirichlet process \cite{zabell1997}, Gibbs-type priors \cite{bacallado2017} and feature-sampling models \cite{camerlenghi2021}. Sufficientness postulates have also been proposed for neutral to the right priors \cite{walker1999} and recurrent Markov exchangeable sequences \cite{zabell1995}.

Within this paradigm, the predictive rule \eqref{intro:equation:dirichlet:prediction} can be shown to be motivated by the same principle behind \eqref{intro:equation:johnson_sufficientness}, extended to more general state spaces $\mathbb{X}$, without necessarily assuming that the number of observed types is limited or known in advance. To that end, let $K_n$ and $X_1^*,\ldots,X_{K_n}^*$ denote, respectively, the number and the distinct values of $(X_1,\ldots,X_n)$ in \textit{order of appearance}. Following \cite{zabell1997}, we assume that the probability of observing the $j$th value to appear
\begin{equation}\label{intro:equation:species_sampling:eq1}
\mathbb{P}(X_{n+1}=X_j^*|X_1,\ldots,X_n)=p_n(N_{n,j})
\end{equation}
is a function of the number of times it has been previously observed, while the probability of observing (discovering) a new type depends only on the sample size,
\begin{equation}\label{intro:equation:species_sampling:eq2}
\mathbb{P}(X_{n+1}\notin\{X_1^*,\ldots,X_{K_n}^*\}|X_1,\ldots,X_n)=r_n.
\end{equation}
Note that unlike \eqref{intro:equation:johnson_sufficientness}, the function $p_n$ does not change depending on $j$, which is a consequence of the framework, since type $j$ is not known to exist in advance \cite[p.381, Footnote 5]{zabell1997}. Moreover, the original proposal by \cite{zabell1997} allows $r_n=r_n(K_n)$ to depend further on the number of distinct values, leading to a general characterization result for the two-parameter Poisson-Dirichlet process. On the other hand, the model \eqref{intro:equation:species_sampling:eq1}-\eqref{intro:equation:species_sampling:eq2} describes an (exchangeable) random partition process (in fact, the Chinese restaurant process and the related Ewens' partition structure \cite[p.380, Remarks]{zabell1997}), but does not discuss the mechanism of `labeling' $X_1^*,X_2^*,\ldots$. Assuming that their values are assigned independently, from a \textit{diffuse} distribution $\nu$, i.e. $\nu(\{x\})=0$ for all $x\in\mathbb{X}$, we can express \eqref{intro:equation:species_sampling:eq1}-\eqref{intro:equation:species_sampling:eq2} jointly as
\begin{equation}\label{intro:equation:species_sampling}
\mathbb{P}(X_{n+1}\in\cdot\mid X_1,\ldots,X_n)=\sum_{j=1}^{K_n}p_n(N_{n,j})\delta_{X_j^*}(\cdot)+r_n\nu(\cdot).
\end{equation}
Exchangeable processes with predictive distributions of the kind \eqref{intro:equation:species_sampling} belong to the class of \textit{species sampling sequences} \cite{hansen2000,pitman1996} and are closely related to the theory of exchangeable random partitions (for an excellent overview, see \cite[Chapter 14]{vaart2017}). It follows from \eqref{intro:equation:species_sampling} that, for any $k=1,\ldots,n$ and all counts $n_1,\ldots,n_k\in\{1,\ldots,n\}$ such that $\sum_{j=1}^kn_j=n$,
\[\sum_{j=1}^{k}p_n(n_j)+r_n=1;\]
thus, assuming that every partition of $(X_1,\ldots,X_n)$ by type has a positive probability of occurring, we can easily deduce that
\[p_n(n_j)=(1-r_n)\frac{n_j}{n}.\]
As a consequence,
\begingroup\allowdisplaybreaks
\begin{align*}
\mathbb{P}(X_{n+1}\in\cdot\mid X_1,\ldots,X_n)&=(1-r_n)\cdot\frac{1}{n}\sum_{j=1}^{K_n}N_{n,j}\delta_{X_j^*}(\cdot)+r_n\nu(\cdot)=(1-r_n)\cdot\frac{1}{n}\sum_{i=1}^{n}\delta_{X_i}(\cdot)+r_n\nu(\cdot),
\end{align*}
\endgroup
suggesting that the analytic form in \eqref{intro:equation:dirichlet:prediction} arises from a ``pure'' sufficientness principle.

An important limitation of \eqref{intro:equation:dirichlet:prediction} and species sampling models, in general, is that they are designed to work with categorical data, taking into account only the frequencies of the observed types. In this work, we study an extension of \eqref{intro:equation:dirichlet:prediction} by replacing $\delta_x$ with a general probability measure $R_x$, so that
\begin{equation}\label{intro:equation:prediction}
\mathbb{P}(X_{n+1}\in\cdot\mid X_1,\ldots,X_n)=(1-a_n)\nu(\cdot)+a_n\cdot\frac{1}{n}\sumN R_{X_i}(\cdot),
\end{equation}
for some constant $a_n\in(0,1)$. Depending on the particular $R$, the predictive rule \eqref{intro:equation:prediction} can more efficiently exploit sample information coming from continuous data by considering, for example, where each observation falls inside $\mathbb{X}$. In fact, for diffuse $R$ and $\nu$, all $X_1,\ldots,X_n$ will be distinct, so notions like random partition and species discovery become practically meaningless. Our goal is to characterize the class of exchangeable processes that are governed by \eqref{intro:equation:prediction}, and thus motivate their use in Bayesian nonparametric analysis from an inductive learning perspective. 

Notice that when $a_n=n/(n+\theta)$ for some $\theta>0$, equation \eqref{intro:equation:prediction} becomes
\[\mathbb{P}(X_{n+1}\in\cdot\mid X_1,\ldots,X_n)=\frac{\theta\nu(\cdot)+\sumN R_{X_i}(\cdot)}{\theta+n},\]
and, following \cite{sariev2023b}, the process $(X_n)_{n\geq1}$ is a member of the class of \textit{measure-valued \Polya sequences} (MVPS). We might expect from \eqref{intro:equation:dirichlet:prediction}, where $R_x=\delta_x$, that the only exchangeable processes having predictive rules \eqref{intro:equation:prediction} will be exchangeable MVPSs, in which case \cite{sariev2023b} provides a complete description of their prior distributions. In particular, it would follow from Theorem 3.13 in \cite{sariev2023b} that when $\mathbb{X}$ is countable or $R$ is dominated by $\nu$, the exchangeable sequence $(X_n)_{n\geq1}$ has a Dirichlet process mixture prior (see Section \ref{section:preliminaries:mvps}). Our main result, Theorem \ref{result:main}, states that this is indeed the case, but the proof is fundamentally different from that for \eqref{intro:equation:dirichlet:prediction}, making extensive use of the measure-valued nature of \eqref{intro:equation:prediction} as the theory of random partitions is generally not available.

The rest of the paper is structured as follows. In the next section, we introduce MVPSs and provide notation that will be used later on. Theorem \ref{result:main} is stated in Section \ref{section:results} along with a ``pure'' sufficientness postulate for exchangeable MVPSs that does not assume a particular analytic form for the predictive distributions, as does \eqref{intro:equation:prediction}. In addition, we provide two other sufficientness postulates that generalize \eqref{intro:equation:johnson_sufficientness} for the case when $\mathbb{X}$ is finite. Proofs are postponed to Section \ref{section:proof}.

\section{The model}\label{section:preliminaries}

\subsection{Preliminaries}

From now on, $\mathbb{X}$ is a complete separable metric space endowed with the Borel $\sigma$-algebra $\mathcal{X}$, which from standard results is countably generated. All random quantities are defined on a common probability space $(\Omega,\mathcal{H},\mathbb{P})$ unless otherwise specified. For all unexplained measure-theoretic details, we refer to \cite{kallenberg2021}.

Given two arbitrary measures $\mu,\pi$ on $\mathbb{X}$, we say that $\mu$ is \textit{absolutely continuous} with respect to $\pi$, denoted $\mu\ll\pi$, if $\mu(A)=0$ whenever $\pi(A)=0$ for any $A\in\mathcal{X}$, and we say that $\mu$ and $\pi$ are \textit{mutually singular}, denoted $\mu\perp\pi$, if there exists $S\in\mathcal{X}$ such that $\mu(S)=0=\pi(S^c)$.

A \textit{transition kernel on $\mathbb{X}$} is a function $R:\mathbb{X}\times\mathcal{X}\rightarrow\mathbb{R}_+$ such that $(i)$ the map $x\mapsto R(x,B)\equiv R_x(B)$ is $\mathcal{X}$-measurable, for all $B\in\mathcal{X}$; and $(ii)$ $R_x$ is a measure on $\mathbb{X}$. In addition, $R$ is said to be \textit{finite} if $R_x(\mathbb{X})<\infty$, and is called a \textit{probability kernel} if $R_x(\mathbb{X})=1$, for all $x\in\mathbb{X}$.

Let $\nu$ be a probability measure on $\mathbb{X}$, and $\mathcal{G}\subseteq\mathcal{X}$ a sub-$\sigma$-algebra on $\mathbb{X}$. A probability kernel $R$ on $\mathbb{X}$ is said to be a \textit{regular version of the conditional distribution (r.c.d.) for $\nu$ given $\mathcal{G}$}, which for emphasis we will denote by
\[R_x(\cdot)=\nu(\cdot\mid\mathcal{G})(x)\qquad\mbox{for }\nu\mbox{-almost every (a.e.) }x,\]
if $(i)$ the map $x\mapsto R_x(B)$ is $\mathcal{G}$-measurable, for all $B\in\mathcal{X}$; and $(ii)$ $\int_AR_x(B)\nu(dx)=\nu(A\cap B)$, for all $A\in\mathcal{G}$ and $B\in\mathcal{X}$. The assumptions on $(\mathbb{X},\mathcal{X})$ guarantee that a r.c.d. for $\nu$ given $\mathcal{G}$ exists and is unique up to a $\nu$-null set, and all conditional distributions must be understood as regular versions. Moreover, a regular version $R$ of $\nu(\cdot\mid\mathcal{G})$ is called \textit{a.e. proper} if there exists $F\in\mathcal{G}$ such that $\nu(F)=1$ and
\[R_x(A)=\delta_x(A)\qquad\mbox{for all }A\in\mathcal{G}\mbox{ and }x\in F.\]
Following \cite[p.649]{berti2007}, an a.e. proper r.c.d. for $\nu$ given $\mathcal{G}$ exists if and only if $\mathcal{G}$ is \textit{countably generated (c.g.) under $\nu$}, in the sense that there exists $C\in\mathcal{G}$ such that $\nu(C)=1$ and $\mathcal{G}\cap C$ is c.g.

\vspace{2mm}

An $\mathbb{X}$-valued sequence $(X_n)_{n\geq1}$ of random variables is said to be \textit{exchangeable} if, for each $n=1,2,\ldots$ and all permutations $\sigma$ of $\{1,\ldots,n\}$,
\[(X_1,\ldots,X_n)\overset{d}{=}(X_{\sigma(1)},\ldots,X_{\sigma(n)}).\]
In that case, by de Finetti representation theorem for exchangeable sequences, see e.g. \cite[Section 3]{aldous1985}, there exists a random probability measure $\tilde{P}$ on $\mathbb{X}$, called the \textit{directing random measure} of the sequence, such that, given $\tilde{P}$, the random variables $X_1,X_2,\ldots$ are conditionally independently and identically distributed (i.i.d.) according to $\tilde{P}$,
\begin{eqnarray}
X_n\mid\tilde{P}& \overset{i.i.d.}{\sim} & \tilde{P} \nonumber\\
\tilde{P}& \sim & Q \nonumber
\end{eqnarray}
where $Q$ is the (nonparameteric) \textit{prior} distribution of $\tilde{P}$. Moreover, $\tilde{P}$ is the almost sure (a.s.) weak limit of the empirical measure
\[\frac{1}{n}\sumN\delta_{X_i}\overset{w}{\longrightarrow}\tilde{P}\qquad\mbox{a.s.},\]
and the predictive distributions of $(X_n)_{n\geq1}$,
\[\mathbb{P}(X_{n+1}\in\cdot\mid X_1,\ldots,X_n)\overset{w}{\longrightarrow}\tilde{P}\qquad\mbox{a.s.}\]

The most popular prior distribution in Bayesian nonparametric analysis is arguably the \textit{Dirichlet process}, $\mbox{DP}(\theta,\nu)$, which is parameterized by a probability measure $\nu$ on $\mathbb{X}$ and a constant $\theta>0$. We recall that $\tilde{P}\sim\mbox{DP}(\theta,\nu)$ if, for every finite partition $\{B_1,\ldots,B_n\}\subseteq\mathcal{X}$ of $\mathbb{X}$, the random vector $(\tilde{P}(B_1),\ldots,\tilde{P}(B_n))$ has a Dirichlet distribution with parameters $(\theta\nu(B_1),\ldots,\theta\nu(B_n))$. Equivalently, see e.g. \cite[Theorem 4.12]{vaart2017}, the Dirichlet process is the distribution of an a.s. discrete random probability measure with so-called stick-breaking weights,
\begin{equation}\label{model:prelim:stick_breaking}
\sum_{j=1}^\infty V_j\delta_{Z_j}\sim\mbox{DP}(\theta\nu),
\end{equation}
where $V_j=W_j\prod_{i=1}^{j-1}W_i$, and $W_1,W_2,\ldots\overset{i.i.d.}{\sim}\Be(1,\theta)$ and $Z_1,Z_2,\ldots\overset{i.i.d.}{\sim}\nu$ are independent random variables.

Exchangeable processes having directing random measures $\tilde{P}$ as in \eqref{model:prelim:stick_breaking}, for arbitrary sequences of non-negative random weights $(V_j)_{j\geq1}$ and independent atoms $(Z_j)_{j\geq1}$ coming from a diffuse distribution $\nu$, are called (proper) \textit{species sampling sequences} and have predictive distributions of the form \eqref{intro:equation:species_sampling} for more general functions $p_n$ and $r_n$, see \cite{hansen2000,pitman1996} for a complete characterization. Because of the discrete nature of their $\tilde{P}$ and the fact that the atoms $Z_1,Z_2,\ldots$ are a.s. all distinct, species sampling models are, by Kingman's theory, in one-to-one correspondence with exchangeable random partitions, up to a specification of $\nu$ (for more information, we refer to \cite{pitman1996} and \cite[Chapter 14]{vaart2017}). In fact, the exchangeable partition generated by a random sample from a Dirichlet process \eqref{model:prelim:stick_breaking} with diffuse $\nu$ is the well-known Chinese restaurant process.

\subsection{Exchangeable MVPS}\label{section:preliminaries:mvps}

Following \cite{sariev2023b}, we call any sequence $(X_n)_{n\geq1}$ of $\mathbb{X}$-valued random variables a \textit{measure-valued \Polya (urn) sequence} with parameters $\theta,\nu$ and $R$, denoted MVPS$(\theta,\nu,R)$, if $\nu(\cdot)=\mathbb{P}(X_1\in\cdot\,)$ and, for each $n=1,2,\ldots$,
\begin{equation}\label{model:mvps:predictive}
\mathbb{P}(X_{n+1}\in\cdot\mid X_1,\ldots,X_n)=\frac{\theta\nu(\cdot)+\sumN R_{X_i}(\cdot)}{\theta+\sumN R_{X_i}(\mathbb{X})},
\end{equation}
where $R$ is a finite transition kernel on $\mathbb{X}$, called the \textit{reinforcement kernel} of the process, $\nu$ is a probability measure on $\mathbb{X}$, known as the \textit{base measure}, and $\theta>0$ is a positive constant. When $R_x(\mathbb{X})=m$ for $\nu$-a.e. $x$ and some $m>0$, both $R$ and the corresponding MVPS are said to be \textit{balanced}. 

The predictive structure \eqref{model:mvps:predictive} admits a physical interpretation in terms of a generalized urn sampling scheme as follows. Suppose initially that we have an urn whose contents are described by the measure $\theta\nu$ in the sense that, for each $B\in\mathcal{X}$, the quantity $\theta\nu(B)$ records the total \textit{mass} of balls whose colors lie in $B$. At time $n=1$, a ball is drawn at random from the urn with probability distribution $\nu$. Depending on its color, $X_1$, we reinforce the urn with additional balls according to the measure $R_{X_1}$, so that the updated composition becomes $\theta\nu+R_{X_1}$. At each subsequent time $n>1$, a ball with color $X_n$ is drawn with probability proportional to $\theta\nu+\sum_{i=1}^{n-1}R_{X_i}$, and the contents of the urn get ``reinforced'' by $R_{X_n}$. The model with $|\mathbb{X}|=k$ and $R_x=\delta_x$, i.e. we only add one additional ball of the observed color, corresponds to the classical $k$-color \Polya urn scheme and measure-valued \Polya urn processes have been proposed by \cite{thacker2022,fortini2021,janson2019,mailler2017} as extensions to infinite colors and more general reinforcement mechanisms. The case where the observation process $(X_n)_{n\geq1}$ is exchangeable has been investigated by \cite{sariev2023b}, and the main results from that paper are summarized in the following theorem.

\begin{theorem}[Theorems 3.2, 3.7 and 3.9 in \cite{sariev2023b}]\label{model:theorem}
Let $(X_n)_{n\geq1}$ be an exchangeable, but not i.i.d. MVPS with parameters $(\theta,\nu,R)$ and directing random measure $\tilde{P}$. Then
\begin{enumerate}
\item[(a)] $R$ is balanced and there exists a (c.g. under $\nu$) sub-$\sigma$-algebra $\mathcal{G}$ of $\mathcal{X}$ such that, normalized, $R$ is a r.c.d. for $\nu$ given $\mathcal{G}$,
\[\frac{R_x(\cdot)}{R_x(\mathbb{X})}=\nu(\cdot\mid\mathcal{G})(x)\qquad\mbox{for }\nu\mbox{-a.e. }x;\]
\item[(b)] $\mathbb{P}(X_{n+1}\in\cdot\mid X_1,\ldots,X_n)$ converges a.s. in total variation, as $n\rightarrow\infty$, to $\tilde{P}$;
\item[(c)] $\tilde{P}$ is equal in law to
\begin{equation}\label{model:mvps:prior}
\sum_{j=1}^\infty V_j\frac{R_{Z_j}(\cdot)}{R_{Z_j}(\mathbb{X})},
\end{equation}
where $(V_j)_{j\geq1}$ and $(Z_j)_{j\geq1}$ are given in \eqref{model:prelim:stick_breaking}.
\end{enumerate}
\end{theorem}

Independently from the theory on measure-valued urn processes, Berti et al. \cite{berti2023} introduced the \textit{kernel based Dirichlet sequence}, which is a balanced MVPS with reinforcement kernel $R(\cdot)=\nu(\cdot\mid\mathcal{G})$, for some sub-$\sigma$-algebra $\mathcal{G}$ of $\mathcal{X}$. In \cite{berti2023}, the authors prove that kernel based Dirichlet sequences are exchangeable, having directing random measures \eqref{model:mvps:prior}. Theorem \ref{model:theorem} provides a converse statement and shows that, under exchangeability, $R$ need not be assumed to be balanced a priori. In fact, it follows from Theorem \ref{model:theorem} that any exchangeable MVPS$(\theta,\nu,R)$ can be reparameterized as a balanced exchangeable MVPS$(\tilde{\theta},\nu,\tilde{R})$ for some probability kernel $\tilde{R}$ and constant $\tilde{\theta}$, so that
\begin{equation}
\label{model:equation:mvps:sufficientness}
\mathbb{P}(X_{n+1}\in\cdot\mid X_1,\ldots,X_n)=\frac{\tilde{\theta}}{\tilde{\theta}+n}\nu(\cdot)+\frac{n}{\tilde{\theta}+n}\cdot\frac{1}{n}\sumN\tilde{R}_{X_i}(\cdot).
\end{equation}
Therefore, exchangeable MVPSs satisfy the sufficientness postulate \eqref{intro:equation:prediction}, and in Section \ref{section:results} we show that the opposite statement is also true. On the other hand, by Theorem \ref{model:theorem}\textit{(c)}, the directing random measure $\tilde{P}$ of any MVPS inherits its properties directly from the reinforcement kernel $\tilde{R}$, so that $\tilde{P}$ is a.s. diffuse and/or absolutely continuous with respect to some reference probability measure whenever $\tilde{R}$ is. This is in stark contrast to the discrete nature of the Dirichlet process \eqref{model:prelim:stick_breaking} and species sampling models, in general, all while keeping the same simple form of the predictive distributions \eqref{model:equation:mvps:sufficientness}.

Theorem \ref{model:theorem} can be refined when $\mathbb{X}$ is countable or $R$ is dominated by $\nu$, in which case exchangeable MVPSs turn out to have Dirichlet process mixture priors.

\begin{theorem}[Theorems 3.10 and 3.13 in \cite{sariev2023b}]\label{model:theorem:dominated}
Let $(X_n)_{n\geq1}$ be as in Theorem \ref{model:theorem}. If $R_x\ll\nu$ for $\nu$-a.e. $x$, then
\begin{enumerate}
\item[(i)] there exists a countable partition $D_1,D_2,\ldots$ of $\mathbb{X}$ in $\mathcal{X}$ such that
\[\frac{R_x(\cdot)}{R_x(\mathbb{X})}=\sum_{k\geq1}\nu(\cdot\mid D_k)\cdot\mathbbm{1}_{D_k}(x)\qquad\mbox{for }\nu\mbox{-a.e. }x;\]
\item[(ii)] letting $m=R_x(\mathbb{X})$, 
\begin{eqnarray}
X_n\mid\tilde{P} & \overset{i.i.d.}{\sim} & \sum_{k\geq1}\tilde{P}(D_k)\,\nu(\cdot\mid D_k) \nonumber\\
\bigl(\tilde{P}(D_k)\bigr)_{k\geq1} & \sim & \textnormal{DP}\Bigl(\frac{\theta}{m},\nu\Bigr) \nonumber
\end{eqnarray}
\end{enumerate}
\end{theorem}

\section{Results}\label{section:results}

Theorem \ref{result:main}, together with the discussion around \eqref{model:equation:mvps:sufficientness}, states that the family of exchangeable MVPSs coincides with the class of exchangeable processes satisfying the sufficientness postulate \eqref{intro:equation:prediction}.

\begin{theorem}\label{result:main}
Let $(X_n)_{n\geq1}$ be an exchangeable process with marginal distribution $\nu$ and predictive distributions \eqref{intro:equation:prediction}, that is, for each $n=1,2,\ldots$,
\begin{equation}\label{equation:predictive_distributions}
\mathbb{P}(X_{n+1}\in\cdot\mid X_1,\ldots,X_n)=(1-a_n)\nu(\cdot)+a_n\cdot\frac{1}{n}\sumN R_{X_i}(\cdot),
\end{equation}
where $R$ is a probability kernel on $\mathbb{X}$, and $a_n\in(0,1)$. Then $a_n=n/(a_1^{-1}-1+n)$ for all $n\geq1$. Moreover, $(X_n)_{n\geq1}$ is an MVPS with parameters $(a_1^{-1}-1,\nu,R)$.
\end{theorem}

\begin{remark}\label{remark:characterization}
Since $(X_n)_{n\geq1}$ in Theorem \ref{result:main} is an exchangeable MVPS, then Theorems \ref{model:theorem} and \ref{model:theorem:dominated} provide additional guidance for the choice of the parameter $R$. In particular, there exists a (c.g. under $\nu)$ sub-$\sigma$-algebra $\mathcal{G}$ of $\mathcal{X}$ such that $R$ is a r.c.d. for $\nu$ given $\mathcal{G}$,
\[R_x(\cdot)=\nu(\cdot\mid\mathcal{G})(x)\qquad\mbox{for }\nu\mbox{-a.e. }x.\]
Therefore, in a Bayesian nonparametric setting, when we make the initial assessment that our exchangeable model is characterized by \eqref{equation:predictive_distributions}, instead of choosing a specific probability kernel $R$, we actually have to decide on a predefined body of knowledge in the form of the $\sigma$-algebra $\mathcal{G}$, making the model ``essentially'' parameterized by $\mathcal{G}$. If, in addition, $R$ is dominated by $\nu$, the above amounts to a decision on a particular partition of $\mathbb{X}$ (see Examples \ref{example:mixture} and \ref{example:k_color}). 

\end{remark}

\begin{example}[Mixture model]\label{example:mixture}
Let $\theta>0$ and $\nu,P_1,P_2,\ldots$ be probability measures on $\mathbb{X}$ such that $P_k(D_k)=1$, $k\geq1$, where $D_1,D_2,\ldots\in\mathcal{X}$ forms a countable partition of $\mathbb{X}$. For each $x\in\mathbb{X}$, we define
\[P_{(x)}(\cdot):=P_k(\cdot)\qquad\mbox{if and only if}\qquad x\in D_k.\]
Let $(X_n)_{n\geq1}$ be an exchangeable MVPS with predictive distributions
\[\mathbb{P}(X_{n+1}\in\cdot\mid X_1,\ldots,X_n)=\frac{\theta}{\theta+n}\nu(\cdot)+\frac{n}{\theta+n}\cdot\frac{1}{n}\sumN P_{(X_i)}(\cdot).\]
By Theorem 3.14 in \cite{sariev2023b}, the reinforcement kernel of any MVPS admits a Lebesgue decomposition with respect to a universal set, so in the present example, there exist sets $C,S\in\mathcal{X}$ such that $\nu(C)=1$,
\[P_{(x)}\perp\nu\quad\mbox{and}\quad P_{(x)}(S)=1,\qquad\mbox{for every }x\in C\cap S,\]
and
\[P_{(x)}\ll\nu\quad\mbox{and}\quad P_{(x)}(S^c)=1,\qquad\mbox{for every }x\in C\cap S^c.\]
Let $\{D_{j_1},D_{j_2},\ldots\}$ be the smallest cover of $C\cap S^c$ extracted from $\{D_1,D_2,\ldots\}$. Since $P_{(x)}\equiv P_k$ when $x\in D_k$, then $P_{(x)}\ll\nu$ and $P_{(x)}(\cup_{i\geq1}D_{j_i})=1$, for all $x\in\bigcup_{i\geq1}D_{j_i}$. Furthermore, $\nu(D_{j_i})>0$, for each $i=1,2,\ldots$, and
\[\mathbb{P}(X_{n+1}\in\cup_{i\geq1}D_{j_i}|X_1,\ldots,X_n\bigr)\geq\frac{\theta}{\theta+n}\nu(\cup_{i\geq1}D_{j_i}),\]
which implies that $\sumINF\mathbb{P}(X_{n+1}\in\cup_{i\geq1}D_{j_i}|X_1,\ldots,X_n)=\infty$ and, from the conditional Borel-Cantelli lemma, $\mathbb{P}(X_n\in\cup_{i\geq1}D_{j_i}\mbox{ i.o.})=1$. It follows that the subsequence of $(X_n)_{n\geq1}$ restricted on $\bigcup_{i\geq1}D_{j_i}$ is itself an exchangeable MVPS with parameters $\theta^*=\theta\nu(\cup_{i\geq1}D_{j_i})$, $\nu^*(\cdot)=\nu(\cdot\,|\cup_{i\geq1}D_{j_i})$ and $R_x=P_{(x)}$, for $x\in\bigcup_{i\geq1}D_{j_i}$, so by Theorem \ref{model:theorem:dominated}, as applied to the subsequence, we get
\[P_{j_i}(\cdot)=P_{(x)}(\cdot)=\nu^*(\cdot\mid D_{j_i})=\nu(\cdot\mid D_{j_i}),\]
using some $x\in D_{j_i}$ as reference. The singular part of the reinforcement kernel has no such simple representation, yet by Theorem \ref{model:theorem} it is a r.c.d. for $\nu$ on $S$.

If $\nu(S)=0$ or, equivalently, $P_k\ll\nu$ for all $k\geq1$, then
\[P_{(x)}(\cdot)=\sum_{k\geq1}\nu(\cdot\mid D_k)\cdot\mathbbm{1}_{D_k}(x)\qquad\mbox{for every }x\in\mathbb{X},\]
and, from Theorem \ref{model:theorem:dominated},
\begin{equation}
\begin{aligned}
X_n\mid\tilde{P}&\quad\overset{i.i.d.}{\sim}&&\sum_{k\geq1}\tilde{P}(D_k)P_k(\cdot) \nonumber\\
\bigl(\tilde{P}(D_k)\bigr)_{k\geq1}&\quad\;\,\sim&&\mbox{DP}(\theta,\nu) \nonumber
\end{aligned}
\end{equation}
Such models are commonly used for density estimation, see e.g. \cite[Chapter 5]{vaart2017}, and exchangeable MVPSs with dominated reinforcement kernels offer a particular set of model choices. For example, when $|\mathbb{X}|<\infty$ and $\nu(A)=\frac{|A|}{|\mathbb{X}|}$, $A\in\mathcal{X}$, is uniform on $\mathbb{X}$, the directing random measure of $(X_n)_{n\geq1}$ becomes
\[\sum_{k\geq1}\tilde{P}(D_k)\frac{|\,\cdot\,\cap\,D_k|}{|D_k|},\]
which is a random histogram with non-uniform widths given by the $|D_k|$'s. At the other extreme, when $\nu(dx)=f(x)dx$ is absolutely continuous (with respect to the Lebesgue measure) on $\mathbb{X}=\mathbb{R}$, the directing random measure of $(X_n)_{n\geq1}$ is itself absolutely continuous with random density
\[\sum_{k\geq1}\tilde{P}(D_k)f_k(x),\]
where $f_k(x)=\frac{\mathbbm{1}_{D_k}(x)f(x)}{\int_{D_k}f(y)dy}$, $x\in\mathbb{X}$, is the section of $f$ over $D_k$, normalized.
\end{example}

\begin{example}($k$-color urn model)\label{example:k_color}
Let $(X_n)_{n\geq1}$ be an exchangeable MVPS$(\theta,\nu,R)$ on $\mathbb{X}=\{1,\ldots,k\}$, where, without loss of generality, we assume that $R$ is a probability kernel (see Section \ref{section:preliminaries:mvps}). By Theorem  \ref{model:theorem:dominated}, there exists a partition $\{D_1,\ldots,D_m\}\subseteq\mathcal{X}$ of $\mathbb{X}$, for some $1\leq m\leq k$, such that
\begin{equation}\label{example:k_color:representation}
R_x(\cdot)=\sum_{l=1}^m\nu(\cdot\mid D_l)\cdot\mathbbm{1}_{D_l}(x)\qquad\mbox{for every }x\in\mathbb{X}.
\end{equation}
It follows that, with $\nu$ and $\theta$ fixed, the total number of exchangeable $k$-color MVPSs is equal to the number of possible partitions of $\mathbb{X}$, also known as Bell's number, which is given by the formula
\[B_k=\sum_{j=0}^k\frac{1}{j!}\sum_{i=0}^j(-1)^{j-i}\binom{j}{i}i^k,\]
with $B_2=2$, $B_3=5$, $B_4=15$, etc. The classical $k$-color \Polya urn model corresponds to the case with the finest partition $D_j=\{j\}$, $j=1,\ldots,k$, while we obtain an i.i.d. sequence when $D_1=\mathbb{X}$. On the other hand, intermediate cases can be appealing from an application point of view and can be used, for example, in a clinical trial setting to target both fixed asymptotic allocations (within the $D_j$'s) and random asymptotic allocations (across the $D_j$'s), simultaneously.
\end{example}

\subsection{``Pure'' sufficientness postulate for MVPSs}\label{section:results:pure}

Recall from the introduction that \eqref{intro:equation:dirichlet:prediction} is equivalent to \eqref{intro:equation:species_sampling:eq1}-\eqref{intro:equation:species_sampling:eq2}, up to a relabeling of the observations. In this section, we investigate whether exchangeable MVPSs can be characterized by statements like those in \eqref{intro:equation:species_sampling:eq1}-\eqref{intro:equation:species_sampling:eq2}, which one might argue reflect a ``pure'' sufficientness postulate since they do not assume a particular analytic form for the predictive distributions. Based on the discussion in Remark \ref{remark:characterization}, such a result should be stated in terms of the sub-$\sigma$-algebra $\mathcal{G}$ for which $R(\cdot)=\nu(\cdot\mid\mathcal{G})$. Here, we follow the approach by \cite[Proposition 3.3]{fortini2024} (see also Example \ref{pure:example:dirichlet_process}), who give an alternative formulation of \eqref{intro:equation:species_sampling:eq1}-\eqref{intro:equation:species_sampling:eq2} via predictive sufficiency. To that end, we introduce, for each $n=1,2,\ldots$ and every $A\in\mathcal{X}$, the notation
\[N_{n,A}:=\sumN\mathbbm{1}_A(X_i).\]
We also define, for each $n=1,2,\ldots$,
\[\mathcal{G}_n:=X_n^{-1}(\mathcal{G}).\]

The next theorem states that an exchangeable process $(X_n)_{n\geq1}$ is an MVPS with reference to some c.g. under $\nu$  sub-$\sigma$-algebra $\mathcal{G}$ of $\mathcal{X}$ if $(i)$ the probability that $X_{n+1}$ belongs to a set $A\in\mathcal{G}$ depends only on the number of observations $N_{n,A}$ in $A$; and $(ii)$, given that it belongs to $\mathcal{G}$, the random variable $X_{n+1}$ is conditionally independent from $(X_1,\ldots,X_n)$. Note that condition $(i)$ becomes vacuous when $\mathcal{G}$ is a simple $\sigma$-algebra generated by a single subset $A\in\mathcal{X}$ (see also Remark 1 in \cite{zabell1982} and the discussion after Proposition \ref{result:k-color}). For that purpose, we introduce the notion of a \textit{$\nu$-essentially not simple} $\sigma$-algebra, which is a $\sigma$-algebra $\mathcal{G}$ containing a finite partition $\{B_1,\ldots,B_k\}\subseteq\mathcal{G}$ of $\mathbb{X}$ with $|k|>2$ elements such that at least three of them have positive $\nu$-probability, $\#\{j:\nu(B_j)>0\}>2$.

\begin{theorem}\label{pure:result:main}
Let $(X_n)_{n\geq1}$ be an exchangeable process with marginal distribution $\nu$, and let $\mathcal{G}$ be a sub-$\sigma$-algebra of $\mathcal{X}$ on $\mathbb{X}$, which is c.g. under $\nu$ and $\nu$-essentially not simple. If for each $n=1,2,\ldots$ it holds
\begin{equation}\label{pure:result:main:eq1}
\mathbb{P}(X_{n+1}\in A|X_1,\ldots,X_n)=\mathbb{P}(X_{n+1}\in A|N_{n,A})\qquad\mbox{for all }A\in\mathcal{G},
\end{equation}
and
\begin{equation}\label{pure:result:main:eq2}
\mathbb{P}(X_{n+1}\in B|X_1,\ldots,X_n,\mathcal{G}_{n+1})=\mathbb{P}(X_{n+1}\in B|\mathcal{G}_{n+1})\qquad\mbox{for all }B\in\mathcal{X},
\end{equation}
then $(X_n)_{n\geq1}$ is an MVPS with reinforcement kernel $R(\cdot)=\nu(\cdot\mid\mathcal{G})$. The converse statement is also true.
\end{theorem}

\begin{example}[Dirichlet process]\label{pure:example:dirichlet_process}
The current example essentially repeats Proposition 3.3 in \cite{fortini2024}. Suppose in Theorem \ref{pure:result:main} that we have $\mathcal{G}=\mathcal{X}$, which is c.g. by assumption. Then \eqref{pure:result:main:eq2} is trivially true, so any exchangeable process $(X_n)_{n\geq1}$ satisfying
\[\mathbb{P}(X_{n+1}\in A|X_1,\ldots,X_n)=\mathbb{P}(X_{n+1}\in A|N_{n,A})\qquad\mbox{for all }A\in\mathcal{X},\]
is an MVPS with reinforcement kernel
\[R_x(\cdot)=\nu(\cdot\mid\mathcal{X})(x)=\delta_x(\cdot)\qquad\mbox{for }\nu\mbox{-a.e. }x,\]
and, following \cite{blackwell1973}, has a Dirichlet process prior.
\end{example}

\begin{example}[Mixture model]\label{pure:example:dominated}
In Theorem \ref{pure:result:main}, suppose that $\mathcal{G}=\sigma(D_1,D_2,\ldots)$, where $D_1,D_2,\ldots\in\mathcal{X}$ forms a countable partition of $\mathbb{X}$. Then \eqref{pure:result:main:eq1}-\eqref{pure:result:main:eq2} can be equivalently formulated as
\[\mathbb{P}(X_{n+1}\in D_k|X_1,\ldots,X_n)=\mathbb{P}(X_{n+1}\in D_k|N_{n,D_k})\qquad\mbox{for each }k=1,2,\ldots,\]
and, with a slight abuse of notation,
\[\mathbb{P}(X_{n+1}\in B|X_1,\ldots,X_n,X_{n+1}\in D_k)=\mathbb{P}(X_{n+1}\in B|X_{n+1}\in D_k)\qquad\mbox{for all }B\in\mathcal{X}.\]
In that case, $(X_n)_{n\geq1}$ is an MVPS with reinforcement kernel
\[R_x(\cdot)=\nu(\cdot\mid\sigma(D_1,D_2,\ldots))(x)=\sum_{k\geq1}\nu(\cdot\mid D_k)\cdot\mathbbm{1}_{D_k}(x)\qquad\mbox{for }\nu\mbox{-a.e. }x,\]
so that $R_x\ll\nu$ and, from Theorem \ref{model:theorem:dominated}, $(X_n)_{n\geq1}$ has a Dirichlet process mixture prior.
\end{example}

\begin{example}[Dirichlet invariant process]\label{pure:example:invariant}
Suppose that $\mathbb{X}=\mathbb{R}$. Define
\[\mathcal{G}:=\{A\in\mathcal{X}:-A=A\}\equiv\{A\cup(-A),A\in\mathcal{X}\},\]
where $-A=\{-x,x\in A\}$. Let $(X_n)_{n\geq1}$ be an exchangeable process such that, for each $n=1,2,\ldots$ and every $A\in\mathcal{X}$,
\begin{equation}\label{pure:example:invariant:eq1}
\mathbb{P}(X_{n+1}\in A|X_1,\ldots,X_n)=\mathbb{P}(X_{n+1}\in A|N_{n,A}+N_{n,-A}),
\end{equation}
and
\begin{equation}\label{pure:example:invariant:eq2}
\mathbb{P}(X_{n+1}\in-A|X_1,\ldots,X_n)=\mathbb{P}(X_{n+1}\in A|X_1,\ldots,X_n).
\end{equation}
If we denote by $\nu$ the marginal distribution of $(X_n)_{n\geq1}$, we get $\nu(-A)=\nu(A)$ from \eqref{pure:example:invariant:eq2}, for every $A\in\mathcal{X}$. Moreover, \eqref{pure:example:invariant:eq1} implies \eqref{pure:result:main:eq1} when $A\in\mathcal{G}$, since $-A=A$.

Let $A_1,\ldots,A_n,A\in\mathcal{X}$ and $B\in\mathcal{G}$. Since $\nu$ is invariant with respect to $\mathcal{G}$, we have
\begingroup\allowdisplaybreaks
\begin{align*}
\mathbb{E}\bigl[\mathbbm{1}_B(X_{n+1})\cdot\mathbb{P}(X_{n+1}\in A|\mathcal{G}_{n+1})\bigr]&=\nu(A\cap B)=\frac{\nu(A\cap B)+\nu((-A)\cap B)}{2}\\
&=\mathbb{E}\Bigl[\mathbbm{1}_B(X_{n+1})\cdot\frac{1}{2}\bigl(\mathbbm{1}_{A}(X_{n+1})+\mathbbm{1}_{-A}(X_{n+1})\bigr)\Bigr];
\end{align*}
\endgroup
thus,
\[\mathbb{P}(X_{n+1}\in A|\mathcal{G}_{n+1})=\frac{\delta_{X_{n+1}}(A)+\delta_{-X_{n+1}}(A)}{2}\qquad\mbox{a.s.}\]
If, in addition, $A\cap(-A)=\emptyset$, then $\mathbb{P}(X_{n+1}\in A|\mathcal{G}_{n+1})=\frac{1}{2}\cdot\mathbbm{1}_{A\cup(-A)}(X_{n+1})$ a.s. and, from \eqref{pure:example:invariant:eq1} and \eqref{pure:example:invariant:eq2},
\begingroup\allowdisplaybreaks
\begin{align*}
\mathbb{E}[\mathbbm{1}_{A_1}(X_1)\cdots\mathbbm{1}_{A_n}(X_n)\mathbbm{1}_A&(X_{n+1})\mathbbm{1}_B(X_{n+1})]\\
&=\mathbb{E}\bigl[\mathbbm{1}_{A_1}(X_1)\cdots\mathbbm{1}_{A_n}(X_n)\cdot\mathbbm{P}(X_{n+1}\in A\cap B|N_{n,A\cap B}+N_{n,(-A)\cap B})\bigr]\\
&=\frac{1}{2}\cdot\mathbb{E}\bigl[\mathbbm{1}_{A_1}(X_1)\cdots\mathbbm{1}_{A_n}(X_n)\cdot\mathbbm{P}(X_{n+1}\in(A\cup(-A))\cap B|N_{n,(A\cup(-A))\cap B})\bigr]\\
&=\frac{1}{2}\cdot\mathbb{E}[\mathbbm{1}_{A_1}(X_1)\cdots\mathbbm{1}_{A_n}(X_n)\mathbbm{1}_{A\cup(-A)}(X_{n+1})\mathbbm{1}_B(X_{n+1})]\\
&=\mathbb{E}\bigl[\mathbb{P}(X_1\in A_1,\ldots,X_n\in A_n|\mathcal{G}_{n+1})\cdot\mathbb{P}(X_{n+1}\in A|\mathcal{G}_{n+1})\cdot\mathbbm{1}_B(X_{n+1})\bigr].
\end{align*}
\endgroup
By varying $A\in\mathcal{C}:=\{(-b,-a],(a,b]:a,b\in\mathbb{R}_+,a<b\}\cup\{\emptyset\}$ and using the fact that $\mathcal{C}$ forms a $\pi$-class generating $\mathcal{X}=\sigma(\mathcal{C})$, we can conclude that $X_{n+1}$ and $(X_1,\ldots,X_n)$ are conditionally independent given $\mathcal{G}_{n+1}$. Hence, $(X_n)_{n\geq1}$ satisfies the conditions of Theorem \ref{pure:result:main} and is therefore an exchangeable MVPS with reinforcement kernel
\[R_x(\cdot)=\nu(\cdot\mid\mathcal{G})(x)=\frac{\delta_x(\cdot)+\delta_{-x}(\cdot)}{2}\qquad\mbox{for }\nu\mbox{-a.e. }x.\]
This model has been studied by \cite{berti2023}, who show that $(X_n)_{n\geq1}$ has a so-called Dirichlet invariant process prior (for more information, see Examples 3 and 17 in \cite{berti2023}).
\end{example}

\subsection{k-color MVPS}\label{section:results:k-color}

In this section, we prove a generalization of Johnson's sufficientness postulate \eqref{intro:equation:johnson_sufficientness} in the context of  the $k$-color urn model from Example \ref{example:k_color}. Although some of the results can be deduced from previous sections, for completeness we present a separate proof in the spirit of \cite{zabell1982}, exploiting the discrete nature of the process.

Let $\mathbb{X}=\{1,\ldots,k\}$, and $\{D_1,\ldots,D_m\}\subseteq\mathcal{X}$ be a partition of $\mathbb{X}$, for some $1\leq m\leq k$. For each $j\in\mathbb{X}$, let us denote by $m(j)$ the index $l\in\{1,\ldots,m\}$ such that $j\in D_l$, and let us define
\[N_{0,l}:=0\qquad\mbox{and}\qquad N_{n,l}:=\sumN\mathbbm{1}_{D_l}(X_i),\qquad\mbox{for }n\geq1,l=1,\ldots,m.\]
If $(X_n)_{n\geq1}$ is an exchangeable MVPS with reinforcement kernel given by \eqref{example:k_color:representation}, then for each $j=1,\ldots,k$,
\begingroup\allowdisplaybreaks
\begin{align*}
\mathbb{P}(X_{n+1}=j|X_1,\ldots,X_n)&=\frac{\theta\nu(\{j\})+\sumN\nu(\{j\}|D_{m(j)})\cdot\mathbbm{1}_{D_{m(j)}}(X_i)}{\theta+n}=\frac{\theta\nu(\{j\})+\nu(\{j\}|D_{m(j)})\cdot N_{n,m(j)}}{\theta+n},
\end{align*}
\endgroup
that is, the probability of observing type $j$ on the $(n+1)$th trial depends on the sample $(X_1,\ldots,X_n)$ only by the number of times that each type in $D_{m(j)}$ has been observed in the past. This fact motivates the next proposition, which extends Johnson's sufficientness postulate \eqref{intro:equation:johnson_sufficientness} by considering a grouping of types according to $D_1,\ldots,D_m$.

For ease of notation, we will denote
\[\mathbb{P}_0(\cdot)=\mathbb{P}(X_1\in\cdot\;)\quad\mbox{and}\quad\mathbb{P}_n(\cdot)=\mathbb{P}(X_{n+1}\in\cdot\mid X_1,\ldots,X_n),\quad\mbox{for }n=1,2,\ldots\]
In addition, we will work under the assumption that any sequence of types has a positive probability of being observed, 
\begin{equation}\label{equation:k-color:assumption}\tag{A}
\mathbb{P}(X_1=x_1,\ldots,X_n=x_n)>0\qquad\mbox{for all }(x_1,\ldots,x_n)\in\mathbb{X}^n\mbox{ and }n=1,2,\ldots
\end{equation}

\begin{proposition}\label{result:k-color}
Let $D_1,\ldots,D_m$ form a partition of $\mathbb{X}=\{1,\ldots,k\}$, with $3\leq m\leq k$, and let $(X_n)_{n\geq1}$ be an exchangeable sequence of $\mathbb{X}$-valued random variables satisfying \eqref{equation:k-color:assumption}. Using the established notation, if for each $j=1,\ldots,k$ and $n=0,1,\ldots$, it holds
\[\mathbb{P}_n(\{j\})=f_{n,j}(N_{n,m(j)}),\]
for some functions $f_{n,j}:\{0,\ldots,n\}\rightarrow[0,1]$, then there exist $w_1,\ldots,w_k>0$ such that $(X_n)_{n\geq1}$ is an MVPS with constant $\bar{w}=\sum_{j=1}^kw_j$, base measure $\nu(\cdot)=\sum_{j=1}^k\frac{w_j}{\bar{w}}\delta_{j}(\cdot)$ and reinforcement kernel
\[R(\cdot)=\nu(\cdot\mid\sigma(D_1,\ldots,D_m)).\]
\end{proposition}

If $m=1$ in Proposition \ref{result:k-color}, it is easy to see that $(X_n)_{n\geq1}$ is i.i.d. with marginal distribution $\mathbb{P}_0$. When $m=2$, however, additional assumptions on $P_n$ are needed to obtain the stated characterization result (see the remark after the proof of Proposition \ref{result:k-color} in Section \ref{section:proof}). A different approach, following the work of Hill and coauthors \cite{hill1987}, is presented in the next proposition, which is more in line with Proposition \ref{result:k-color} and covers the case $m=2$. Hill et al.'s \cite{hill1987} original result is itself recovered by taking $D_j=\{j\}$, for $j=1,\ldots,k$.

\begin{proposition}\label{result:k-color:alternative}
Let $D_1,\ldots,D_m$ form a partition of $\mathbb{X}=\{1,\ldots,k\}$, with $1\leq m\leq k$, and let $(X_n)_{n\geq1}$ be an exchangeable, but not i.i.d. sequence of $\mathbb{X}$-valued random variables satisfying \eqref{equation:k-color:assumption}. Fix $w_1,\ldots,w_k>0$, and define
\[\bar{w}:=\sum_{j=1}^kw_j\qquad\mbox{and}\qquad\bar{w}_l:=\sum_{j\in D_l}w_j,\quad\mbox{for }l=1,\ldots,m.\]
Using the established notation, if for each $j=1,\ldots,k$ and $n=0,1,\ldots$, it holds
\[\mathbb{P}_n(\{j\})=f_j\biggl(\frac{\bar{w}_{m(j)}+N_{n,m(j)}}{\bar{w}+n}\biggr),\]
for some functions $f_j:[0,1]\rightarrow[0,1]$, then $(X_n)_{n\geq1}$ is an MVPS with constant $\bar{w}$, base measure $\nu(\cdot)=\sum_{j=1}^k\frac{w_j}{\bar{w}}\delta_{j}(\cdot)$ and reinforcement kernel
\[R(\cdot)=\nu(\cdot\mid\sigma(D_1,\ldots,D_m)).\]
\end{proposition}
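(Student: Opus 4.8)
The plan is to pass to the partition level first and then resolve the within-cell structure, mirroring the proof of Proposition \ref{result:k-color} but replacing the appeal to Zabell's theorem (which requires $m\geq3$) by the urn characterization of Hill, Lane and Sudderth \citep{hill1987}, which remains available when $m=2$. Summing the postulate over the colors of a fixed cell, set $g_l:=\sum_{j:x_j\in D_l}f_j$; then the coarsened process $(Y_n)$ with $Y_n:=m(X_n)$ is exchangeable and satisfies $\mathbb{P}(Y_{n+1}=l\mid Y_1,\dots,Y_n)=g_l(\tfrac{\bar w_l+N_{n,l}}{\bar w+n})$ a.s. Because this value is measurable with respect to the coarsened history, each indicator process $(\mathbbm{1}_{D_l}(X_n))_{n\geq1}$ is, by the tower property, a two-color exchangeable sequence whose predictive probability is a function of the \Polya-adjusted proportion $\tfrac{\bar w_l+N_{n,l}}{\bar w+n}$. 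Since the target MVPS predictive equals $f_j(t)=\tfrac{w_j}{\bar w_{m(j)}}t$ with $t=\tfrac{\bar w_{m(j)}+N_{n,m(j)}}{\bar w+n}$, the whole statement reduces to showing that each $g_l$ is the identity and each $f_j$ is linear with this slope.

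Next I would apply \citep{hill1987} to each indicator process to obtain the dichotomy that, on the reachable values of its argument, $g_l$ is either the identity or constant. To discard the constant branch I would combine the normalization $\sum_l g_l(\tfrac{\bar w_l+N_{n,l}}{\bar w+n})=1$ with assumption \eqref{equation:k-color:assumption}: if some cells sat on the identity branch and others on a constant one, the left-hand side would vary with the attainable counts while the right-hand side stayed fixed, which is impossible unless all cells lie on the same branch. The all-constant branch makes $(Y_n)$ — and, once the within-cell step below is in place, all of $(X_n)$ — i.i.d., contradicting the hypothesis. Hence $g_l(t)=t$ on the reachable set, so $\mathbb{P}_n(D_l)=\tfrac{\bar w_l+N_{n,l}}{\bar w+n}$ and $(Y_n)$ is a \Polya (Dirichlet--multinomial) sequence with weights $(\bar w_1,\dots,\bar w_m)$. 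I expect this to be the main obstacle: checking that the specific argument $\tfrac{\bar w_l+N_{n,l}}{\bar w+n}$ meets the hypotheses of the Hill--Lane--Sudderth characterization, and correctly bookkeeping the i.i.d. exclusion across the coupling of all $m$ cells.

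For the within-cell step I would reproduce, for two colors $x_i,x_j$ in the same cell $D_l$, the exchangeability swap computations that lead to \eqref{proof:k-color:cluster:result3} and \eqref{proof:k-color:cluster:result4}: comparing the probabilities of ``$x_j$ then $h$ draws outside $D_l$'' against ``$h$ draws outside $D_l$ then $x_j$'', and of the associated block-swapped strings, forces the right-hand sides to depend on $j$ only through $l$, whence the ratio $f_i(t)/f_j(t)$ takes a common value at all connected reachable arguments and is therefore a constant $d_{i,j}$. Combined with $\sum_{j:x_j\in D_l}f_j=g_l=\mathrm{id}$, this yields $f_j(t)=c_j\,t$ with $\sum_{j:x_j\in D_l}c_j=1$; equivalently, the conditional probability $\mathbb{P}_n(\{x_j\}\mid D_l)=c_j$ is constant in $n$.

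Finally I would identify the parameters. Evaluating at $n=0$ gives $c_j=\mathbb{P}_0(\{x_j\})/\mathbb{P}_0(D_l)$, and the identity $g_l=\mathrm{id}$ gives $\mathbb{P}_0(D_l)=\bar w_l/\bar w$; setting $w_j:=\bar w\,\mathbb{P}_0(\{x_j\})$ then yields $c_j=w_j/\bar w_{m(j)}$ and $\sum_{j:x_j\in D_l}w_j=\bar w_l$, consistent with the weights appearing in the postulate. Substituting $f_j(t)=c_j t$ produces $\mathbb{P}_n(\{x_j\})=\tfrac{w_j+(w_j/\bar w_{m(j)})N_{n,m(j)}}{\bar w+n}$, which is exactly the MVPS predictive with constant $\bar w$, base measure $\nu(\cdot)=\sum_j w_j\delta_{x_j}(\cdot)/\bar w$, and reinforcement $R_x(\cdot)=\nu(\cdot\mid\sigma(D_1,\dots,D_m))(x)$, since $R_x(\{x_j\})=\nu(\{x_j\}\mid D_{m(j)})\mathbbm{1}_{D_{m(j)}}(x)=c_j\mathbbm{1}_{D_{m(j)}}(x)$. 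This identifies $(X_n)_{n\geq1}$ as the stated MVPS and completes the argument.
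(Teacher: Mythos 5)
Your proposal is correct and follows essentially the same route as the paper's (very condensed) proof: pass to the coarsened partition-level process, use the Hill--Lane--Sudderth characterization to force $g_l(p_l)=p_l$, and then rerun the within-cell exchangeability-swap argument of Proposition \ref{result:k-color} from \eqref{proof:k-color:cluster:result1} onward to get $f_j(t)=c_j t$ and identify the parameters. The one point to watch in your "identity or constant" dichotomy is that the Hill--Lane--Sudderth theorem admits a third, degenerate branch (the ``follow-the-majority'' urn, where the predictive function takes the value $0$), which must be ruled out via assumption \eqref{equation:k-color:assumption}; the paper glosses over this just as you do.
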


\section{Proofs}\label{section:proof}

\begin{proof}[Proof of Theorem \ref{result:main}] Theorem \ref{result:main} will be proven in several steps. We start by deriving some relations between $\nu$, $R$ and $(a_n)_{n\geq1}$ as a consequence of exchangeability.\\

\noindent\textit{Step 1.} Let $A,B\in\mathcal{X}$. It follows from \eqref{equation:predictive_distributions} and $\mathbb{P}(X_1\in dx)=\nu(dx)$ that
\begingroup\allowdisplaybreaks
\begin{align*}
\mathbb{P}(X_1\in A,X_2\in B)&=\int_A\mathbb{P}(X_2\in B|X_1=x)\mathbb{P}(X_1\in dx)\\
&=\int_A\bigl\{(1-a_1)\nu(B)+a_1R_x(B)\bigr\}\nu(dx)=(1-a_1)\nu(A)\nu(B)+a_1\int_AR_x(B)\nu(dx).
\end{align*}
\endgroup
Since $(X_1,X_2)\overset{d}{=}(X_2,X_1)$ by exchangeability, then
\begingroup\allowdisplaybreaks
\begin{align*}
(1-a_1)\nu(A)\nu(B)+a_1\int_AR_x(B)\nu(dx)&=\mathbb{P}(X_1\in A,X_2\in B)\\
&=\mathbb{P}(X_1\in B,X_2\in A)=(1-a_1)\nu(A)\nu(B)+a_1\int_BR_x(A)\nu(dx);
\end{align*}
\endgroup
therefore, $\int_{x\in A}\int_{y\in B}R_x(dy)\nu(dx)=\int_{x\in B}\int_{y\in A}R_x(dy)\nu(dx)$ or, as measures on $\mathbb{X}^2$,
\begin{equation}\label{proof:identity1}
R_x(dy)\nu(dx)=R_y(dx)\nu(dy).
\end{equation}

Next, we have for $\nu$-a.e. $x$ that
\begingroup\allowdisplaybreaks
\begin{align*}
\mathbb{P}(X_3\in B|X_1=x)&\,=\int_\mathbb{X}\mathbb{P}(X_3\in B|X_1=x,X_2=y)\mathbb{P}(X_2\in dy|X_1=x)\\
&\,=\int_\mathbb{X}\Bigl\{(1-a_2)\nu(B)+\frac{a_2}{2}R_x(B)+\frac{a_2}{2}R_y(B)\Bigr\}\bigl((1-a_1)\nu(dy)+a_1R_x(dy)\bigr)\\
&\,=(1-a_2)\nu(B)+\frac{a_2}{2}R_x(B)+\frac{(1-a_1)a_2}{2}\int_\mathbb{X}R_y(B)\nu(dy)+\frac{a_1a_2}{2}\int_\mathbb{X}R_y(B)R_x(dy)\\
&\overset{(a)}{=}(1-a_2)\nu(B)+\frac{a_2}{2}R_x(B)+\frac{(1-a_1)a_2}{2}\nu(B)+\frac{a_1a_2}{2}\int_\mathbb{X}R_y(B)R_x(dy),
\end{align*}
\endgroup
for every $B\in\mathcal{X}$, where $(a)$ follows from \eqref{proof:identity1} as $\int_\mathbb{X}R_y(B)\nu(dy)=\nu(B)$. Since $(X_1,X_2)\overset{d}{=}(X_1,X_3)$ by exchangeability, we get for $\nu$-a.e. $x$ that
\begingroup\allowdisplaybreaks
\begin{align*}
(1-a_1)\nu(B)+a_1 R_x(B)&=\mathbb{P}(X_2\in B|X_1=x)\\
&=\mathbb{P}(X_3\in B|X_1=x)\\
&=(1-a_2)\nu(B)+\frac{a_2}{2}R_x(B)+\frac{(1-a_1)a_2}{2}\nu(B)+\frac{a_1a_2}{2}\int_\mathbb{X}R_y(B)R_x(dy);
\end{align*}
\endgroup
thus, rearranging the terms,
\[\int_\mathbb{X}R_y(B)R_x(dy)=\Bigl(1-\frac{a_2-2a_1+a_1a_2}{a_1a_2}\Bigr)R_x(B)+\frac{a_2-2a_1+a_1a_2}{a_1a_2}\nu(B).\]
Using that $\mathcal{X}$ is countably generated, we obtain for $\nu$-a.e. $x$ that, as measures on $\mathbb{X}$,
\begin{equation}\label{proof:identity2}
\int_{y\in\mathbb{X}}R_y(dz)R_x(dy)=(1-c^*)R_x(dz)+c^*\nu(dz),
\end{equation}
where $c^*:=(a_2-2a_1+a_1a_2)/(a_1a_2)$.

Let $n\geq2$. It follows from \eqref{equation:predictive_distributions} that, for a.e. $(x_1,\ldots,x_{n-1},x)$ in $\mathbb{X}^n$ with respect to the marginal distribution $\mathbb{P}_{(X_1,\ldots,X_n)}$ of $(X_1,\ldots,X_n)$ on $\mathcal{X}^n$,
\begingroup\allowdisplaybreaks
\begin{align}\label{proof:a.e.set}
\begin{aligned}
\mathbb{P}(X_{n+1}\in B|X_1=x_1,\ldots,&X_{n-1}=x_{n-1},X_n=x)=(1-a_n)\nu(dy)+\frac{a_n}{n}\sum_{i=1}^{n-1}R_{x_i}(B)+\frac{a_n}{n}R_x(B),
\end{aligned}
\end{align}
\endgroup
for every $B\in\mathcal{X}$. On the other hand, disintegrating $\mathbb{P}_{(X_1,\ldots,X_n)}$ repeatedly via \eqref{equation:predictive_distributions}, we obtain (see \eqref{proof:abs_cont:disintegration})
\[\mathbb{P}(X_1\in dx_1,\ldots,X_n\in dx_n)\geq\prod_{i=1}^{n-1}(1-a_i)\nu(dx_1)\cdots\nu(dx_n).\]
Thus, \eqref{proof:a.e.set} remains true for $(\nu\times\cdots\times\nu)$-a.e. $(x_1,\ldots,x_{n-1},x)$ in $\mathbb{X}^n$, so integrating \eqref{proof:a.e.set} with respect to $\nu(dx_1),\ldots,\nu(dx_{n-1})$, we get that, for $\nu$-a.e. $x$,
\begingroup\allowdisplaybreaks
\begin{align}
\int_\mathbb{X}\cdots\int_\mathbb{X}\mathbb{P}(X_{n+1}\in B|&X_1=x_1,\ldots,X_{n-1}=x_{n-1},X_n=x)\nu(dx_{n-1})\cdots\nu(dx_1) \nonumber\\
&=(1-a_n)\nu(B)+\frac{a_n}{n}\sum_{i=1}^{n-1}\nu(B)+\frac{a_n}{n}R_x(B)\nonumber\\
&=(1-a_n)\nu(B)+\frac{(n-1)a_n}{n}\nu(B)+\frac{a_n}{n}R_x(B),\label{proof:equation:partial_result1}
\end{align}
\endgroup
where we have used that $\int_\mathbb{X}R_{x_i}(B)\nu(dx_i)=\nu(B)$ from \eqref{proof:identity1}, for $i=1,\ldots,n-1$.

On the other hand, for $(\nu\times\cdots\times\nu)$-a.e. $(x_1,\ldots,x_{n-1},x)$ in $\mathbb{X}^n$,
\begingroup\allowdisplaybreaks
\begin{align}
\mathbb{P}(X_{n+2}&\in B|X_1=x_1,\ldots,X_{n-1}=x_{n-1},X_n=x) \nonumber\\
&=\int_\mathbb{X}\mathbb{P}(X_{n+2}\in B|X_1=x_1,\ldots,X_n=x,X_{n+1}=y)\mathbb{P}(X_{n+1}\in dy|X_1=x_1,\ldots,X_n=x) \nonumber\\
&\,\begin{aligned}=\int_\mathbb{X}\Bigl\{(1-a_{n+1})\nu(B)&+\frac{a_{n+1}}{n+1}\sum_{i=1}^{n-1}R_{x_i}(B)+\frac{a_{n+1}}{n+1}R_x(B)\\
&+\frac{a_{n+1}}{n+1}R_y(B)\Bigr\}\Bigl((1-a_n)\nu(dy)+\frac{a_n}{n}\sum_{i=1}^{n-1} R_{x_i}(dy)+\frac{a_n}{n}R_x(dy)\Bigr)\end{aligned} \nonumber\\
&\,\begin{aligned}=(1-a_{n+1})\nu(B)&+\frac{a_{n+1}}{n+1}\sum_{i=1}^{n-1}R_{x_i}(B)+\frac{a_{n+1}}{n+1}R_x(B)+\frac{(1-a_n)a_{n+1}}{n+1}\int_\mathbb{X}R_y(B)\nu(dy)\\
&+\frac{a_na_{n+1}}{n(n+1)}\sum_{i=1}^{n-1}\int_\mathbb{X}R_y(B)R_{x_i}(dy)+\frac{a_na_{n+1}}{n(n+1)}\int_\mathbb{X}R_y(B)R_x(dy)\end{aligned} \nonumber\\
&\begin{aligned}\overset{(a)}{=}(1-a_{n+1})&\nu(B)+\frac{a_{n+1}}{n+1}\sum_{i=1}^{n-1}R_{x_i}(B)+\frac{a_{n+1}}{n+1}R_x(B)+\frac{(1-a_n)a_{n+1}}{n+1}\nu(B)\\
&+\frac{a_na_{n+1}}{n(n+1)}\sum_{i=1}^{n-1}\int_\mathbb{X}R_y(B)R_{x_i}(dy)+(1-c^*)\frac{a_na_{n+1}}{n(n+1)}R_x(B)+c^*\frac{a_na_{n+1}}{n(n+1)}\nu(B),\end{aligned}\label{proof:equation:partial_result2a}
\end{align}
\endgroup
for every $B\in\mathcal{X}$, where $(a)$ follows from \eqref{proof:identity1} and \eqref{proof:identity2}. Note that, for each $i=1,\ldots,n-1$, the following identity is true from \eqref{proof:identity1},
\begingroup\allowdisplaybreaks
\begin{align*}
\int_{x_i\in\mathbb{X}}\int_{y\in\mathbb{X}}R_y(B)R_{x_i}(dy)\nu(dx_i)&=\int_{x_i\in\mathbb{X}}\int_{y\in\mathbb{X}}R_y(B)R_y(dx_i)\nu(dy)\\
&=\int_{y\in\mathbb{X}}R_y(B)R_y(\mathbb{X})\nu(dy)=\int_{y\in\mathbb{X}}R_y(B)\nu(dy)=\nu(B).
\end{align*}
\endgroup
Thus, integrating \eqref{proof:equation:partial_result2a} with respect to $\nu(dx_1),\ldots,\nu(dx_{n-1})$, we get that, for $\nu$-a.e. $x$,
\begingroup\allowdisplaybreaks
\begin{align}
\int_{x_1\in\mathbb{X}}\cdots&\int_{x_{n-1}\in\mathbb{X}}\mathbb{P}(X_{n+2}\in B|X_1=x_1,\ldots,X_{n-1}=x_{n-1},X_n=x)\nu(dx_{n-1})\cdots\nu(dx_1) \nonumber\\
&\begin{aligned}=(1-a_{n+1})\nu(B)&+\frac{a_{n+1}}{n+1}\sum_{i=1}^{n-1}\nu(B)+\frac{a_{n+1}}{n+1}R_x(B)+\frac{(1-a_n)a_{n+1}}{n+1}\nu(B)\\
&+\frac{a_na_{n+1}}{n(n+1)}\sum_{i=1}^{n-1}\nu(B)+(1-c^*)\frac{a_na_{n+1}}{n(n+1)}R_x(B)+c^*\frac{a_na_{n+1}}{n(n+1)}\nu(B)\end{aligned} \nonumber\\
&\begin{aligned}=(1-a_{n+1})\nu(B)&+\frac{(n-1)a_{n+1}}{n+1}\nu(B)+\frac{a_{n+1}}{n+1}R_x(B)+\frac{(1-a_n)a_{n+1}}{n+1}\nu(B)\\
&+\frac{(n-1)a_na_{n+1}}{n(n+1)}\nu(B)+(1-c^*)\frac{a_na_{n+1}}{n(n+1)}R_x(B)+c^*\frac{a_na_{n+1}}{n(n+1)}\nu(B).\end{aligned}\label{proof:equation:partial_result2b}
\end{align}
\endgroup
Since $(X_1,\ldots,X_n,X_{n+1})\overset{d}{=}(X_1,\ldots,X_n,X_{n+2})$ by exchangeability, we can equate  \eqref{proof:equation:partial_result1} and \eqref{proof:equation:partial_result2b} to obtain, for $\nu$-a.e. $x$,
\begingroup\allowdisplaybreaks
\begin{align*}
(1-a_n)\nu(B)&+\frac{(n-1)a_n}{n}\nu(B)+\frac{a_n}{n}R_x(B)\\
&=\int_\mathbb{X}\cdots\int_\mathbb{X}\mathbb{P}(X_{n+1}\in B|X_1=x_1,\ldots,X_{n-1}=x_{n-1},X_n=x)\nu(dx_{n-1})\cdots\nu(dx_1)\\
&=\int_\mathbb{X}\cdots\int_\mathbb{X}\mathbb{P}(X_{n+2}\in B|X_1=x_1,\ldots,X_{n-1}=x_{n-1},X_n=x)\nu(dx_{n-1})\cdots\nu(dx_1)\\
&\,\begin{aligned}=(1-a_{n+1})\nu(B)&+\frac{(n-1)a_{n+1}}{n+1}\nu(B)+\frac{a_{n+1}}{n+1}R_x(B)+\frac{(1-a_n)a_{n+1}}{n+1}\nu(B)\\
&+\frac{(n-1)a_na_{n+1}}{n(n+1)}\nu(B)+(1-c^*)\frac{a_na_{n+1}}{n(n+1)}R_x(B)+c^*\frac{a_na_{n+1}}{n(n+1)}\nu(B),\end{aligned}
\end{align*}
\endgroup
for every $B\in\mathcal{X}$; therefore, after some simple algebra, 
\[\bigl(na_{n+1}-(n+1)a_n+(1-c^*)a_na_{n+1}\bigr)\nu(B)=\bigl(na_{n+1}-(n+1)a_n+(1-c^*)a_na_{n+1}\bigr)R_x(B).
\]
Using that $\mathcal{X}$ is countably generated, we get for $\nu$-a.e. $x$ that, as measures on $\mathbb{X}$,
\begin{equation}\label{proof:equation:constants}
\bigl(na_{n+1}-(n+1)a_n+(1-c^*)a_na_{n+1}\bigr)\nu(dy)=\bigl(na_{n+1}-(n+1)a_n+(1-c^*)a_na_{n+1}\bigr)R_x(dy).
\end{equation}~ 

\noindent\textit{Step 2.} We proceed by showing that, for different specifications of $R$, equations \eqref{proof:identity2} and \eqref{proof:equation:constants} together imply the prescribed form of the coefficients $a_n$ and, in particular, that $c^*=0$. To that end, let us consider for each $x\in\mathbb{X}$ the Lebesgue decomposition of the reinforcement kernel
\[R_x=R_x^\perp+R_x^a,\]
where $R_x^\perp$ and $R_x^a$ are substochastic measures on $\mathbb{X}$ such that $R_x^\perp\perp\nu$ for some $S_x\in\mathcal{X}$ with $R_x^\perp(S_x^c)=0=\nu(S_x)$, and $R_x^a\ll\nu$. It follows from Theorem 1 in \cite{lange1973} that the mappings $x\mapsto R_x^\perp$ and $x\mapsto R_x^a$ are measurable, hence the following sets are well-defined in $\mathcal{X}$,
\[D:=\{x\in\mathbb{X}:R_x\neq\nu\}\qquad\mbox{and}\qquad F:=\{x\in\mathbb{X}:R_x^\perp(S_x)>0\}.\]

\noindent\textit{Case 1: $\nu(D)>0$.}

By definition, for every $x\in D$, there exists $B_x\in\mathcal{X}$ such that $\nu(B_x)\neq R_x(B_x)$. It follows from \eqref{proof:equation:constants} that, for $\nu$-a.e. $x$ in $D$,
\[\bigl(na_{n+1}-(n+1)a_n+(1-c^*)a_na_{n+1}\bigr)\nu(B_x)=\bigl(na_{n+1}-(n+1)a_n+(1-c^*)a_na_{n+1}\bigr)R_x(B_x),\]
which is true if and only if
\[na_{n+1}-(n+1)a_n+(1-c^*)a_na_{n+1}=0,\]
or, equivalently,
\[\frac{a_{n+1}}{n+1}=\frac{a_n}{n+(1-c^*)a_n}.\]
Using the substitution $b_n=\frac{n}{a_n}$, we obtain $b_{n+1}=b_n+(1-c^*)$; thus, $b_n=b_1+(n-1)(1-c^*)$ for each $n=1,2,\ldots$, and so
\begin{equation}\label{proof:weights}
a_n=\frac{n}{(n-1)(1-c^*)+\frac{1}{a_1}}\qquad\mbox{for }n=1,2,\ldots.
\end{equation}
If $c^*=1$, then $a_n=na_1$, which leads to a contradiction since, by assumption, $0<a_n<1$ for all $n=1,2,\ldots$; otherwise,
\[\limN a_n=\limN\frac{1}{(1-c^*)-\frac{1-c^*}{n}+\frac{1}{a_1n}}=\frac{1}{1-c^*},\]
implying that $a_n\notin(0,1)$ for $n$ large enough, absurd, unless $c^*\leq0$.\\

\noindent\textit{Sub-Case 1a: $\nu(D)>0$ and $\nu(F)>0$.}

Let $G\in\mathcal{X}$ be the $\nu$-a.s. set in \eqref{proof:identity2}. Define
\[G_0:=\{x\in G:R_x(G)=1\}\quad\mbox{and}\quad G_n:=\{x\in G_{n-1}:R_x(G_{n-1})=1\},\quad\mbox{for }n\geq1.\]
It follows from \eqref{proof:identity1} that
\[1=\int_\mathbb{X}R_x(\mathbb{X})\nu(dx)=\int_GR_x(\mathbb{X})\nu(dx)=\int_\mathbb{X}R_x(G)\nu(dx),\]
which implies that $R_x(G)=1$ for $\nu$-a.e. $x$, and so $\nu(G_0)=1$. Proceeding by induction, we obtain $\nu(G_n)=1$ for all $n\geq1$; thus, letting $G^*:=\bigcap_{n=0}^\infty G_n$, we have $\nu(G^*)=1$ and $R_x(G_n)=1$, for all $x\in G^*$ and $n\geq1$. As a result, for every $x\in G^*$, we get $R_x(G^*)=1$ and, since $G^*\subseteq G$,
\begin{equation}\label{proof:identity2.1}
\int_{y\in\mathbb{X}}R_y(dz)R_x(dy)=(1-c^*)R_x(dz)+c^*\nu(dz).
\end{equation}

Recall that $F=\{x\in\mathbb{X}:R_x^\perp(S_x)>0\}$. Let us define
\[\beta:=\sup_{x\in F\cap G^*}R_x^\perp(S_x).\]
Assuming that $\nu(F)>0$, we have $\beta>0$. Fix $\epsilon\in(0,\beta)$. From the properties of suprema, there exists $x\in F\cap G^*$ such that $R_x^\perp(S_x)\geq\beta-\epsilon$. Moreover, $\nu(S_x)=0$, which implies that $R_y^a(S_x)=0$ for all $y\in\mathbb{X}$. It follows that, first
\[(1-c^*)R_x(S_x)+c^*\nu(S_x)=(1-c^*)R_x^\perp(S_x)\geq(1-c^*)(\beta-\epsilon),\]
since $c^*\leq0$ from the previous part, and second
\[\int_\mathbb{X}R_y(S_x)R_x(dy)=\int_\mathbb{X}R_y^\perp(S_x)R_x(dy)\overset{(a)}{\leq}\int_\mathbb{X}R_y^\perp(S_y)R_x(dy)\overset{(b)}{=}\int_{F\cap G^*}R_y^\perp(S_y)R_x(dy)\leq\beta,\]
where we have used in $(a)$ that $R_y^\perp(S_y^c)=0$, so $R_y^\perp(S_x)=R_y^\perp(S_x\cap S_y)\leq R_y^\perp(S_y)$, for $y\in\mathbb{X}$; and in $(b)$ that $(i)$ $R_y^\perp(S_y)=0$ for $y\in F^c$, by construction, and $(ii)$ $R_x(G^*)=1$, since $x\in G^*$. Therefore, by \eqref{proof:identity2.1},
\[\beta\geq\int_\mathbb{X}R_y(S_x)R_x(dy)=(1-c^*)R_x(S_x)+c^*\nu(S_x)\geq(1-c^*)(\beta-\epsilon).\]
Letting $\epsilon\downarrow0$, we get $1-c^*\leq\frac{\beta}{\beta-\epsilon}\rightarrow 1$, so $c^*\geq 0$. As a consequence, $c^*=0$, which implies from \eqref{proof:weights} that
\[a_n=\frac{n}{n+\frac{1}{a_1}-1}.\]
Denoting $\theta:=a_1^{-1}-1$, we obtain $\theta>0$ and $a_n=n/(n+\theta)$, for each $n=1,2,\ldots$, so
\[\mathbb{P}(X_{n+1}\in dy\mid X_1,\ldots,X_n)=\frac{\theta}{\theta+n}\nu(dy)+\frac{n}{\theta+n}\cdot\frac{1}{n}\sumN R_{X_i}(dy)=\frac{\theta\nu(dy)+\sumN R_{X_i}(dy)}{\theta+n}\qquad\mbox{a.s.},\]
that is, $(X_n)_{n\geq1}$ is an MVPS with parameters $(\theta,\nu,R)$.\\

\noindent\textit{Sub-Case 1b: $\nu(D)>0$ and $\nu(F)=0$.}

If $\nu(F)=0$ instead, then $R_x\ll\nu$ for $\nu$-a.e. $x$, so $R_x(dy)\nu(dx)\ll\nu(dx)\nu(dy)$ and there exists a jointly measurable function $r:\mathbb{X}^2\rightarrow\mathbb{R}_+$ such that
\begin{equation}\label{proof:abs_cont:equation}
R_x(dy)\nu(dx)=r(x,y)\nu(dx)\nu(dy).
\end{equation}
In that case, \eqref{proof:identity1} implies that, as measures on $\mathbb{X}^2$,
\begin{equation}\label{proof:abs_cont:identity1}
r(x,y)\nu(dx)\nu(dy)=r(y,x)\nu(dx)\nu(dy).
\end{equation}

Let $A,B,C\in\mathcal{X}$. It follows from the form of the predictive distributions \eqref{equation:predictive_distributions} that
\begingroup\allowdisplaybreaks
\begin{align}
\mathbb{P}(X_1&\in A,X_2\in B,X_3\in C) \nonumber\\
&=\int_A\int_B\mathbb{P}(X_3\in C|X_1=x,X_2=y)\mathbb{P}(X_2\in dy|X_1=x)\nu(dx) \nonumber\\
&=\int_A\int_B\Bigl\{(1-a_2)\nu(C)+\frac{a_2}{2}R_x(C)+\frac{a_2}{2}R_y(C)\Bigr\}\bigl((1-a_1)\nu(dy)+a_1R_x(dy)\bigr)\nu(dx) \nonumber\\
&\begin{aligned}=\int_A\Bigl\{(1-a_1)&(1-a_2)\nu(B)\nu(C)+a_1(1-a_2)\nu(C)R_x(B)+\frac{(1-a_1)a_2}{2}\nu(B)R_x(C)\\
&+\frac{a_1a_2}{2}R_x(B)R_x(C)+\frac{(1-a_1)a_2}{2}\int_BR_y(C)\nu(dy)+\frac{a_1a_2}{2}\int_BR_y(C)R_x(dy)\Bigr\}\nu(dx)\end{aligned} \nonumber\\
&\begin{aligned}=(1-a_1)(1-a_2)\nu(A)&\nu(B)\nu(C)+a_1(1-a_2)\nu(C)\int_AR_x(B)\nu(dx)\\
&+\frac{(1-a_1)a_2}{2}\nu(B)\int_AR_x(C)\nu(dx)+\frac{a_1a_2}{2}\int_AR_x(B)R_x(C)\nu(dx)\\
&+\frac{(1-a_1)a_2}{2}\nu(A)\int_BR_y(C)\nu(dy)+\frac{a_1a_2}{2}\int_A\int_BR_y(C)R_x(dy)\nu(dx).\end{aligned}\label{proof:abs_cont:disintegration}
\end{align}
\endgroup
By exchangeability,
\[\mathbb{P}(X_1\in A,X_2\in B,X_3\in C)=\mathbb{P}(X_1\in A,X_2\in C,X_3\in B),\]
so from \eqref{proof:abs_cont:disintegration} we obtain, after some simplification, 
\begingroup\allowdisplaybreaks
\begin{align*}
&\begin{aligned}a_1(1-a_2)\nu(C)\int_AR_x(B)\nu(dx)&+\frac{(1-a_1)a_2}{2}\nu(B)\int_AR_x(C)\nu(dx)\\
&+\frac{(1-a_1)a_2}{2}\nu(A)\int_BR_y(C)\nu(dy)+\frac{a_1a_2}{2}\int_A\int_BR_y(C)R_x(dy)\nu(dx)\end{aligned}\\
&\begin{aligned}=a_1(1-a_2)\nu(B)\int_AR_x(C)\nu(dx)&+\frac{(1-a_1)a_2}{2}\nu(C)\int_AR_x(B)\nu(dx)\\
&+\frac{(1-a_1)a_2}{2}\nu(A)\int_CR_y(B)\nu(dy)+\frac{a_1a_2}{2}\int_A\int_CR_y(B)R_x(dy)\nu(dx).\end{aligned}
\end{align*}
\endgroup
Using that $\int_BR_y(C)\nu(dy)=\int_CR_y(B)\nu(dy)$ from \eqref{proof:identity1}, we further get
\begingroup\allowdisplaybreaks
\begin{align*}
a_1&(1-a_2)\nu(C)\int_AR_x(B)\nu(dx)+\frac{(1-a_1)a_2}{2}\nu(B)\int_AR_x(C)\nu(dx)+\frac{a_1a_2}{2}\int_A\int_BR_y(C)R_x(dy)\nu(dx)\\
&=a_1(1-a_2)\nu(B)\int_AR_x(C)\nu(dx)+\frac{(1-a_1)a_2}{2}\nu(C)\int_AR_x(B)\nu(dx)+\frac{a_1a_2}{2}\int_A\int_CR_y(B)R_x(dy)\nu(dx),
\end{align*}
\endgroup
which after some simple algebra becomes
\begingroup\allowdisplaybreaks
\begin{align*}
\int_A\int_BR_y(C)R_x(dy)\nu(dx)-c^*\cdot\nu(C)&\int_AR_x(B)\nu(dx)\\
&=\int_A\int_CR_y(B)R_x(dy)\nu(dx)-c^*\cdot\nu(B)\int_AR_x(C)\nu(dx),
\end{align*}
\endgroup
where $c^*=\frac{a_2-2a_1+a_1a_2}{a_1a_2}\leq 0$ from before. In terms of the representation \eqref{proof:abs_cont:equation}, the above becomes
\begingroup\allowdisplaybreaks
\begin{align*}
\int_A\int_B\int_C\bigl(r(x,y)r(y,z)-c^*r(x,y)\bigr)\nu(dz)\nu(dy)\nu(dx)&=\int_A\int_B\int_C\bigl(r(x,z)r(z,y)-c^*r(x,z)\bigr)\nu(dz)\nu(dy)\nu(dx)\\
&=\int_A\int_B\int_C\bigl(r(x,z)r(y,z)-c^*r(x,z)\bigr)\nu(dz)\nu(dy)\nu(dx),
\end{align*}
\endgroup
where the last equation follows from \eqref{proof:abs_cont:identity1}. As a result, for $(\nu\times\nu\times\nu)$-a.e. $(x,y,z)$ in $\mathbb{X}^3$,
\[r(x,y)(r(y,z)-c^*)=r(x,z)(r(y,z)-c^*).\]

Suppose by contradiction that $c^*<0$. In such a case, after canceling the common term above, we get that, for $(\nu\times\nu\times\nu)$-a.e. $(x,y,z)$ in $\mathbb{X}^3$,
\begin{equation}\label{proof:abs_cont:identity2}
r(x,y)=r(x,z).
\end{equation}
Together \eqref{proof:abs_cont:identity1} and \eqref{proof:abs_cont:identity2} imply that, for $(\nu\times\nu\times\nu)$-a.e. $(x,y,z)$ in $\mathbb{X}^3$,
\begin{equation}\label{proof:abs_cont:identity3}
r(x,y)r(y,z)=r(x,z)r(z,y).
\end{equation}
Recall that, for every $x\in D$, there exists $B_x\in\mathcal{X}$ such that $\nu(B_x)\neq R_x(B)$. It follows from \eqref{proof:abs_cont:identity3} that, for $\nu$-a.e. $x$ in $D$,
\begingroup\allowdisplaybreaks
\begin{align*}
\int_\mathbb{X}R_y(B_x)R_x(dy)&=\int_{y\in\mathbb{X}}\int_{z\in B_x}r(x,y)r(y,z)\nu(dz)\nu(dy)\\
&=\int_{y\in B_x}\int_{z\in\mathbb{X}}r(x,y)r(y,z)\nu(dz)\nu(dy)=\int_{B_x}R_y(\mathbb{X})R_x(dy)=R_x(B_x).
\end{align*}
\endgroup
On the other hand, by \eqref{proof:identity2}, for $\nu$-a.e. $x$ in $D$,
\[\int_\mathbb{X}R_y(B_x)R_x(dy)=(1-c^*)R_x(B_x)+c^*\nu(B_x);\]
therefore, $R_x(B_x)=\nu(B_x)$, absurd. As a result, $c^*=0$, which implies as in the previous sub-case that $(X_n)_{n\geq1}$ is an MVPS with parameters $(a^{-1}-1,\nu,R)$.\\

\noindent\textit{Case 2: $\nu(D)=0$.}

If $\nu(D)=0$, then $R_x\equiv\nu$ for $\nu$-a.e. $x$. Since $\mathbb{P}(X_1\in\cdot\,)=\nu(\cdot)$, it follows for every $B_1,B_2\in\mathcal{X}$ that
\begingroup\allowdisplaybreaks
\begin{align*}
\mathbb{P}(X_1\in B_1,X_2\in B_2)&=\int_{B_1}\mathbb{P}(X_2\in B_2|X_1=x)\nu(dx)\\
&=\int_{B_1}\bigl\{(1-a_1)\nu(B_2)+a_1R_x(B_2)\bigr\}\nu(dx)=\nu(B_1)\nu(B_2);
\end{align*}
\endgroup
thus, $\mathbb{P}_{(X_1,X_2)}(\cdot)=(\nu\times\nu)(\cdot)$. By induction, for every $B_1,\ldots,B_n\in\mathcal{X}$,
\[\mathbb{P}(X_1\in B_1,\ldots,X_n\in B_n)=\prod_{i=1}^n\nu(B_i);\]
therefore, $(X_n)_{n\geq1}$ is i.i.d. with marginal distribution $\nu$, and thus trivially an MVPS with parameters $(a^{-1}-1,\nu,R)$. This concludes the proof of Theorem \ref{result:main}.
\end{proof}

\begin{proof}[Proof of Theorem \ref{pure:result:main}]
Suppose that $(X_n)_{n\geq1}$ satisfies \eqref{pure:result:main:eq1}-\eqref{pure:result:main:eq2}. By assumption, $\mathcal{G}$ is c.g. under $\nu$, so there exists (see \cite[p.649]{berti2007}) an a.e. proper regular version $R$ of $\nu(\cdot\mid\mathcal{G})$, in the sense that, for some $F\in\mathcal{G}$ such that $\nu(F)=1$, it holds
\[R_x(A)=\delta_x(A)\qquad\mbox{for all }A\in\mathcal{G}\mbox{ and }x\in F.\]
Moreover, since $(X_n)_{n\geq1}$ is identically distributed with marginal distribution $\nu$, we have $\mathbb{P}(X_1\in F,\ldots,X_n\in F)=1$, for each $n=1,2,\ldots$, which implies that, on a set of probability one,
\begin{equation}\label{proof:pure:eq1}
\sumN\delta_{X_i}(A)=\sumN R_{X_i}(A)\qquad\mbox{for all }A\in\mathcal{G}.
\end{equation}

Let $B_1,\ldots,B_k\in\mathcal{G}$ form a partition of $\mathbb{X}$ with $k>2$ and $\#\{j:\nu(B_j)>0\}>2$. It follows from \eqref{pure:result:main:eq1} that the sequence $(X_n)_{n\geq1}$ on the level of $\{B_1,\ldots,B_k\}$ satisfies Johnson's sufficientness postulate \eqref{intro:equation:johnson_sufficientness}, so from Theorem 2.1 and Corollary 2.1 in \cite{zabell1982}, there exist $\alpha_{B_1},\ldots,\alpha_{B_k}\geq0$ such that $\sum_{i=1}^k\alpha_{B_i}>0$ and, for each $j=1,\ldots,k$,
\[\mathbb{P}(X_{n+1}\in B_j|X_1,\ldots,X_n)=\frac{\alpha_{B_j}+N_{n,B_j}}{\sum_{i=1}^k\alpha_{B_i}+n}\qquad\mbox{a.s.}\]
In particular, $\mathbb{P}(X_1\in B_j)=\alpha_{B_j}\bigl/\sum_{i=1}^k\alpha_{B_i}$, from where $a_{B_j}=\nu(B_j)\sum_{i=1}^k\alpha_{B_i}$. Moreover, $\theta:=\sum_{i=1}^k\alpha_{B_i}$ does not depend on the particular partition chosen. Indeed, let $\{C_1,\ldots,C_m\}\subseteq\mathcal{G}$ be another partition of $\mathbb{X}$ with $m>2$ and $\#\{l:\nu(C_l)>0\}>2$. If $\{C_1,\ldots,C_m\}$ is finer than $\{B_1,\ldots,B_k\}$ and, say, $B_j=\bigcup_{l=1}^LC_{j_l}$, where $\nu(B_j)>0$, we obtain, using the results in \cite{zabell1982},
\begingroup\allowdisplaybreaks
\begin{align*}
\frac{\theta\nu(B_j)+N_{n,B_j}}{\theta+n}&=\sum_{l=1}^L\mathbb{P}(X_{n+1}\in C_{j_l}|X_1,\ldots,X_n)\\
&=\sum_{l=1}^L\frac{\nu(C_{j_l})\sum_{i=1}^m\alpha_{C_i}+N_{n,C_{j_l}}}{\sum_{i=1}^m\alpha_{C_i}+n}=\frac{\nu(B_j)\sum_{i=1}^m\alpha_{C_i}+N_{n,B_j}}{\sum_{i=1}^m\alpha_{C_i}+n}\quad\mbox{a.s.}
\end{align*}
\endgroup
After simplification, we get $n(\theta-\sum_{i=1}^m\alpha_{C_i})\nu(B_j)=(\theta-\sum_{i=1}^m\alpha_{C_i})N_{n,B_j}$ a.s., for all $n=1,2,\ldots$, which implies that $\theta=\sum_{i=1}^m\alpha_{C_i}$. In any other case, we can consider the partition $\{B_i\cap C_l,i=1,\ldots,k,l=1,\ldots,m\}$ and conclude that the three partitions produce the same constant.

Let $A\in\mathcal{G}$. Arguing through the partition $\{A\cap B_j,A^c\cap B_j,j=1,\ldots,k\}$,
\begingroup\allowdisplaybreaks
\begin{align*}
\mathbb{P}(X_{n+1}\in A|X_1,\ldots,X_n)&=\sum_{j=1}^k\frac{\theta\nu(A\cap B_j)+N_{n,A\cap B_j}}{\theta+n}=\frac{\theta\nu(A)+\sumN\delta_{X_i}(A)}{\theta+n}\qquad\mbox{a.s.}
\end{align*}
\endgroup
Using that $\mathcal{G}$ is c.g. under $\nu$, we obtain that, as measures on $\mathbb{X}$,
\begin{equation}\label{proof:pure:eq2}
\mathbb{P}(X_{n+1}\in dx|X_1,\ldots,X_n)=\frac{\theta\nu(A)+\sumN\delta_{X_i}(dx)}{\theta+n}\qquad\mbox{a.s.}
\end{equation}

Let $A\in\mathcal{G}$ and $B\in\mathcal{X}$. It follows that
\begingroup\allowdisplaybreaks
\begin{align*}
\mathbb{E}\bigl[\mathbbm{1}_A(X_{n+1})\cdot\mathbb{P}(X_{n+1}\in B|\mathcal{G}_{n+1})\bigr]&=\nu(A\cap B)=\int_AR_x(B)\nu(dx)=\mathbb{E}[\mathbbm{1}_A(X_{n+1})\cdot R_{X_{n+1}}(B)],
\end{align*}
\endgroup
and, since $\omega\mapsto R_{X_{n+1}(\omega)}(B)$ is $\mathcal{G}_{n+1}$-measurable,
\[\mathbb{P}(X_{n+1}\in B|\mathcal{G}_{n+1})=R_{X_{n+1}}(B)\qquad\mbox{a.s.}\]
Applying \eqref{proof:pure:eq2}, we obtain, on a set of probability one,
\begingroup\allowdisplaybreaks
\begin{align*}
\mathbb{P}(X_{n+1}\in B|X_1,\ldots,X_n)&=\mathbb{E}\bigl[\mathbb{P}(X_{n+1}\in B|X_1,\ldots,X_n,\mathcal{G}_{n+1})|X_1,\ldots,X_n\bigr]\\
&=\mathbb{E}\bigl[\mathbb{P}(X_{n+1}\in B|\mathcal{G}_{n+1})|X_1,\ldots,X_n\bigr]\\
&=\mathbb{E}\bigl[R_{X_{n+1}}(B)|X_1,\ldots,X_n\bigr]\\
&=\int_\mathbb{X}R_x(B)\biggl\{\frac{\theta\nu(dx)+\sumN\delta_{X_i}(dx)}{\theta+n}\biggr\}\\
&=\frac{\theta\int_\mathbb{X}R_x(B)\nu(dx)+\sumN\int_\mathbb{X}R_x(B)\delta_{X_i}(dx)}{\theta+n}\\
&=\frac{\theta\nu(B)+\sumN R_{X_i}(B)}{\theta+n}.
\end{align*}
\endgroup
Note that the above result does not contradict \eqref{proof:pure:eq2} when $B\in\mathcal{G}$ because of \eqref{proof:pure:eq1}. Therefore, we can conclude that $(X_n)_{n\geq1}$ is an MVPS with parameters $(\theta,\nu,R)$.

Conversely, suppose that $(X_n)_{n\geq1}$ is an exchangeable MVPS with reinforcement kernel $R(\cdot)=\nu(\cdot\mid\mathcal{G})$. Recall from Theorem \ref{model:theorem} and the discussion around \eqref{model:equation:mvps:sufficientness} that any exchangeable MVPS admits such a representation. Moreover, we may assume that $R$ is a.e. proper, without loss of generality. Arguing as in \eqref{proof:pure:eq1}, we obtain, for all $A\in\mathcal{G}$, on a set of probability one,
\begingroup\allowdisplaybreaks
\begin{align*}
\mathbb{P}(X_{n+1}\in A|X_1,\ldots,X_n)&=\frac{\theta\nu(A)+\sumN R_{X_i}(A)}{\theta+n}=\frac{\theta\nu(A)+\sumN\delta_{X_i}(A)}{\theta+n}=\frac{\theta\nu(A)+N_{n,A}}{\theta+n};
\end{align*}
\endgroup
thus, $(X_n)_{n\geq1}$ satisfies \eqref{pure:result:main:eq1}. Let $A_1,\ldots,A_n,B\in\mathcal{X}$ and $A\in\mathcal{G}$. Then
\begingroup\allowdisplaybreaks
\begin{align*}
\mathbb{E}[\mathbbm{1}_{A_1}(X_1)\cdots\mathbbm{1}_{A_n}(X_n)\mathbbm{1}_{A}&(X_{n+1})\mathbbm{1}_{B}(X_{n+1})]\\
&\,=\mathbb{E}\bigl[\mathbbm{1}_{A_1}(X_1)\cdots\mathbbm{1}_{A_n}(X_n)\cdot\mathbb{P}(X_{n+1}\in A\cap B|X_1,\ldots,X_n)\bigr]\\
&\,=\mathbb{E}\biggl[\mathbbm{1}_{A_1}(X_1)\cdots\mathbbm{1}_{A_n}(X_n)\frac{\theta\nu(A\cap B)+\sumN R_{X_i}(A\cap B)}{\theta+n}\biggr]\\
&\overset{(a)}{=}\mathbb{E}\biggl[\mathbbm{1}_{A_1}(X_1)\cdots\mathbbm{1}_{A_n}(X_n)\frac{\theta\nu(A\cap B)+\sumN R_{X_i}(B)\delta_{X_i}(A)}{\theta+n}\biggr]\\
&\,=\mathbb{E}\biggl[\mathbbm{1}_{A_1}(X_1)\cdots\mathbbm{1}_{A_n}(X_n)\int_AR_x(B)\biggl\{\frac{\theta\nu(dx)+\sumN\delta_{X_i}(dx)}{\theta+n}\biggr\}\biggr]\\
&\,=\mathbb{E}\bigl[\mathbbm{1}_{A_1}(X_1)\cdots\mathbbm{1}_{A_n}(X_n)\cdot\mathbb{E}[\mathbbm{1}_A(X_{n+1})R_{X_{n+1}}(B)|X_1,\ldots,X_n]\bigr]\\
&\,=\mathbb{E}[\mathbbm{1}_{A_1}(X_1)\cdots\mathbbm{1}_{A_n}(X_n)\mathbbm{1}_A(X_{n+1})\cdot R_{X_{n+1}}(B)]\\
&\,=\mathbb{E}\bigl[\mathbb{P}(X_1\in A_1,\ldots,X_n\in A_n|\mathcal{G}_{n+1})\cdot\mathbbm{1}_A(X_{n+1})\cdot R_{X_{n+1}}(B)\bigr]\\
&\overset{(b)}{=}\mathbb{E}\bigl[\mathbb{P}(X_1\in A_1,\ldots,X_n\in A_n|\mathcal{G}_{n+1})\cdot\mathbbm{1}_A(X_{n+1})\cdot\mathbb{P}(X_{n+1}\in B|\mathcal{G}_{n+1})\bigr],
\end{align*}
\endgroup
where $(a)$ follows from $R_x(A\cap B)=\nu(A\cap B|\mathcal{G})(x)=\nu(B|\mathcal{G})(x)\delta_x(A)=R_x(B)\delta_x(A)$, for $\nu$-a.e. $x$, since $A\in\mathcal{G}$; and $(b)$ follows by taking $A_1=\cdots=A_n=\mathbb{X}$ and noticing that $R_{X_{n+1}}(B)$ is $\mathcal{G}_{n+1}$-measurable. Therefore, $X_{n+1}$ and $(X_1,\ldots,X_n)$ are conditionally independent given $\mathcal{G}_{n+1}$. This concludes the proof of Theorem \ref{pure:result:main}.
\end{proof}

\begin{proof}[Proof of Proposition \ref{result:k-color}]
Let us define, for every $n\geq0$, $l=1,\ldots,m$ and $n_l=0,1,\ldots,n$, the quantity
\[g_{n,l}(n_l):=\sum_{j\in D_l}f_{n,j}(n_l).\]
It follows from \eqref{equation:k-color:assumption} that $f_{n,j}(n_l)>0$, and so $g_{n,l}(n_l)>0$. Recall that $m(j)$ denotes, for each $j=1,\ldots,k$, the index $l\in\{1,\ldots,m\}$ such that $j\in D_l$. Define $Y_n:=m(X_n)$, for $n\geq1$. Then $(Y_n)_{n\geq1}$ is an exchangeable process with predictive distributions
\begin{equation}\label{proof:k-color:cluster:predictive}
\mathbb{P}(Y_{n+1}=l|Y_1,\ldots,Y_n)=P_n(D_l)=g_{n,l}(N_{n,l})\qquad\mbox{a.s., for }l=1,\ldots,m.
\end{equation}
Therefore, $(Y_n)_{n\geq1}$ satisfies Johnson's sufficientness postulate \eqref{intro:equation:johnson_sufficientness}, so from Theorem 2.1 and Corollary 2.1 in \cite{zabell1982}, there exist $\bar{w}_1,\ldots,\bar{w}_m>0$ such that
\begin{equation}\label{proof:k-color:cluster:result1}
g_{n,l}(n_l)=\frac{\bar{w}_l+n_l}{\bar{w}+n},
\end{equation}
where $\bar{w}:=\sum_{l=1}^m\bar{w}_l$; thus, in particular,
\begin{equation}\label{proof:k-color:cluster:result2}
\mathbb{P}_0(D_l)=\mathbb{P}(Y_1=l)=g_{0,l}(0)=\frac{\bar{w}_l}{\bar{w}}\qquad\mbox{for }l=1,\ldots,m.
\end{equation}

Let $j\in\{1,\ldots,k\}$ be such that $D_{m(j)}=\{j\}$. Then $f_{n,j}(n_{m(j)})=g_{n,m(j)}(n_{m(j)})$, so from \eqref{proof:k-color:cluster:result1}-\eqref{proof:k-color:cluster:result2},
\begin{equation}\label{proof:k-color:representation1}
\mathbb{P}_n(\{j\})=g_{n,m(j)}(N_{n,m(j)})=\frac{\bar{w}\cdot\mathbb{P}_0(\{j\})+\sumN\mathbbm{1}_{D_{m(j)}}(X_i)}{\bar{w}+n}\qquad\mbox{a.s.}
\end{equation}

Let $i,j\in\{1,\ldots,k\}$ be such that $m(i)=m(j)$. Fix $n\geq0$, where for $n=0$ the expressions involving $\mathbb{P}(\,\cdot\mid X_1,\ldots,X_n)$ are meant as unconditional statements. It follows for every $h=1,2,\ldots$ that
\begingroup\allowdisplaybreaks
\begin{align*}
\mathbb{P}(X_{n+1}=j,X_{n+2}\notin D_{m(j)}&,\ldots,X_{n+h+1}\notin D_{m(j)}|X_1,\ldots,X_n)\\
&=f_{n,j}(N_{n,m(j)})\prod_{s=1}^h\bigl(1-g_{n+s,m(j)}(N_{n,m(j)}+1)\bigr)\quad\mbox{a.s.},
\end{align*}
\endgroup
and
\begingroup\allowdisplaybreaks
\begin{align*}
\mathbb{P}(X_{n+1}\notin D_{m(j)},\ldots,X_{n+h}\notin &D_{m(j)},X_{n+h+1}=j|X_1,\ldots,X_n)\\
&=f_{n+h,j}(N_{n,m(j)})\prod_{s=0}^{h-1}\bigl(1-g_{n+s,m(j)}(N_{n,m(j)})\bigr)\quad\mbox{a.s.}
\end{align*}
\endgroup
By exchangeability, the two expressions are equal, so we obtain
\[\frac{f_{n,j}(N_{n,m(j)})}{f_{n+h,j}(N_{n,m(j)})}=\frac{\prod_{s=0}^{h-1}\bigl(1-g_{n+s,m(j)}(N_{n,m(j)})\bigr)}{\prod_{s=1}^h\bigl(1-g_{n+s,m(j)}(N_{n,m(j)}+1)\bigr)}\qquad\mbox{a.s.}\]
Since the right-hand side depends on $j$ only through $m(j)$, the same ratio $\frac{f_{n,i}}{f_{n+h,i}}$ for $i$ is equal to the same quantity; therefore, with $n_{m(j)}\equiv n_{m(i)}\in\{0,1,\ldots,n\}$ fixed,
\begin{equation}\label{proof:k-color:cluster:result3}
\frac{f_{n,i}(n_{m(i)})}{f_{n,j}(n_{m(j)})}=\frac{f_{n+h,i}(n_{m(i)})}{f_{n+h,j}(n_{m(j)})}\qquad\mbox{for every }h=1,2,\ldots
\end{equation}

On the other hand, for every $h=1,\ldots,n$,
\begingroup\allowdisplaybreaks
\begin{align*}
\mathbb{P}(X_1\notin D_{m(j)},\ldots,X_n&\notin D_{m(j)},X_{n+1}=j,X_{n+2}\in D_{m(j)},\ldots,X_{2n+1}\in D_{m(j)})\\
&=f_{n,j}(0)\prod_{s=0}^{n-1}\bigl(1-g_{s,m(j)}(0)\bigr)\prod_{s=1}^ng_{n+s,m(j)}(s)\qquad\mbox{a.s.},
\end{align*}
\endgroup
and, applying the permutation $\sigma$ of $\{1,\ldots,2n+1\}$ that swaps the indices $(1,\ldots,h)$ with $(n+2,\ldots,n+h+1)$,
\begingroup\allowdisplaybreaks
\begin{align*}
\mathbb{P}(X_{\sigma(1)}\notin D_{m(j)},&\ldots,X_{\sigma(n)}\notin D_{m(j)},X_{\sigma(n+1)}=j,\ldots,X_{\sigma(2n+1)}\in D_{m(j)})\\
&\begin{aligned}=f_{n,j}(h)&\prod_{s=0}^{h-1}g_{s,m(j)}(s)\prod_{s=0}^{n-h-1}\bigl(1-g_{h+s,m(j)}(h)\bigr)\\
&\times\prod_{s=1}^h\bigl(1-g_{n+s,m(j)}(h)\bigr)\prod_{s=1}^{n-h}g_{n+h+s,m(j)}(h+s-1)\quad\mbox{a.s.}\end{aligned}
\end{align*}
\endgroup
Since $(X_1,\ldots,X_{2n+1})\overset{d}{=}(X_{\sigma(1)},\ldots,X_{\sigma(2n+1)})$ by exchangeability, we obtain, as in \eqref{proof:k-color:cluster:result3}, with $n$ fixed,
\begin{equation}\label{proof:k-color:cluster:result4}
\frac{f_{n,i}(0)}{f_{n,j}(0)}=\frac{f_{n,i}(h)}{f_{n,j}(h)}\qquad\mbox{for every }h=1,\ldots,n.
\end{equation}
It follows from \eqref{proof:k-color:cluster:result3} and \eqref{proof:k-color:cluster:result4} that, for any such pair $i,j$, the ratio below is independent of $n$ and $n_{m(j)}\equiv n_{m(i)}$,
\[\frac{f_{n,i}(n_{m(i)})}{f_{n,j}(n_{m(j)})}=:d_{i,j}.\]
Summing over all $i$ in $D_{m(j)}$ and letting $w_j:=\frac{\bar{w}_{m(j)}}{\sum_{i\in D_{m(j)}}d_{i,j}}$, gives
\[f_{n,j}(n_{m(j)})=\frac{1}{\sum_{i\in D_{m(j)}}d_{i,j}}g_{n,m(j)}(n_{m(j)})=\frac{w_j}{\bar{w}_{m(j)}}g_{n,m(j)}(n_{m(j)});\]
thus, in particular, from \eqref{proof:k-color:cluster:result2},
\[\mathbb{P}_0(\{j\})=f_{0,j}(0)=\frac{w_j}{\bar{w}_{m(j)}}g_{0,m(j)}(0)=\frac{w_j}{\bar{w}}.\]
In that case, $\mathbb{P}_0(\{j\}|D_{m(j)})=\frac{w_j}{\bar{w}_{m(j)}}$; therefore, using \eqref{proof:k-color:cluster:result1},
\begingroup\allowdisplaybreaks
\begin{align}
\begin{aligned}
\mathbb{P}_n(\{j\})=f_{n,j}(N_{n,m(j)})&=\frac{w_j}{\bar{w}_{m(j)}}\frac{\bar{w}_{m(j)}+N_{n,m(j)}}{\bar{w}+n}\\
&=\frac{\bar{w}\cdot\mathbb{P}_0(\{j\})+\mathbb{P}_0(\{j\}|D_{m(j)})\sumN\mathbbm{1}_{D_{m(j)}}(X_i)}{\bar{w}+n}\qquad\mbox{a.s.}
\end{aligned}\label{proof:k-color:representation2}
\end{align}
\endgroup
We can now conclude from \eqref{proof:k-color:representation1} and \eqref{proof:k-color:representation2} that, as measures on $\mathbb{X}$,
\begingroup\allowdisplaybreaks
\begin{align*}
\mathbb{P}(X_{n+1}\in dy\mid X_1,\ldots,X_n)&=\frac{\bar{w}\cdot\mathbb{P}_0(dy)+\sumN\sum_{l=1}^m\mathbb{P}_0(dy\mid D_l)\cdot\mathbbm{1}_{D_l}(X_i)}{\bar{w}+n}\\
&=\frac{\bar{w}\cdot\mathbb{P}_0(dy)+\sumN\mathbb{P}_0(dy\mid\sigma(D_1,\ldots,D_m))(X_i)}{\bar{w}+n}\quad\mbox{a.s.},
\end{align*}
\endgroup
which implies that $(X_n)_{n\geq1}$ is an MVPS with parameters $\bigl(\bar{w},\mathbb{P}_0,\mathbb{P}_0(\cdot|\sigma(D_1,\ldots,D_m))\bigr)$.
\end{proof}

\begin{remark}
Notice that when $m=2$, the sufficientness postulate \eqref{proof:k-color:cluster:predictive} at the level of the partition does not necessarily imply \eqref{proof:k-color:cluster:result1}, and so $(X_n)_{n\geq1}$ need not be an MVPS (see Remark 1 in \cite{zabell1982} and references therein). To obtain \eqref{proof:k-color:cluster:result1} when $m=2$, the authors in \cite{zabell1982} suggest assuming that the function $g_{n,l}$ in \eqref{proof:k-color:cluster:predictive} is linear in $n_l$.
\end{remark}

\begin{proof}[Proof of Proposition \ref{result:k-color:alternative}]
Let us define, for every $l=1,\ldots,m$ and $p_l\in\bigl\{\frac{\bar{w}_l+n_l}{\bar{w}+n}:n_l=0,\ldots,n;n=0,1,\ldots\bigr\}$, the quantity
\[g_l(p_l):=\sum_{j\in D_l}f_j(p_l).\]
Then a straightforward extension of the arguments in \cite{hill1987} to $m$ colors implies that $g_l(p_l)=p_l$. The rest of the proof follows the same steps as in the proof of Proposition \ref{result:k-color}, starting from \eqref{proof:k-color:cluster:result1}.
\end{proof} 

\subsection*{Acknowledgments}


This study is financed by the European Union-NextGenerationEU, through the National Recovery and Resilience Plan of the Republic of Bulgaria, project No. BG-RRP-2.004-0008.

\bibliography{mybib} 

\end{document}